\patchcmd{\subsubsection}{\itshape}{\bfseries}{}{}
\newtheorem{thm}{Theorem}[section]
\newtheorem{cor}[thm]{Corollary}
\newtheorem{prop}[thm]{Proposition}
\theoremstyle{definition}
\newtheorem{defn}[thm]{Definition}
\newtheorem{rem}[thm]{Remark}
\newtheorem{ex}[thm]{Example}
\newtheorem{as}[thm]{Assumption}
\newtheorem{crr}[thm]{Criterion}
\def\ZZ{\mathbb{Z}}
\def\NN{\mathbb{N}}
\def\CC{\mathbb{C}}
\newcommand{\gra}{{\alpha}} \newcommand{\grb}{{\beta}} \newcommand{\grg}{{\gamma}} \newcommand{\grd}{{\delta}}
 \newcommand{\grz}{{\zeta}}  \newcommand{\gru}{{\theta}}
\newcommand{\gri}{{\iota}} \newcommand{\grk}{{\kappa}} \newcommand{\grl}{{\lambda}} \newcommand{\grm}{{\mu}}
   \newcommand{\grp}{{\pi}}
 \newcommand{\grs}{{\sigma}}  
\newcommand{\grf}{{\phi}} \newcommand{\grx}{{\chi}} \newcommand{\grc}{{\psi}} \newcommand{\grv}{{\omega}}
\newcommand{\grF}{{\Phi}}   
\newcommand{\arw}{\rightarrow} 
\title[Decomposition matrices for the generic Hecke algebras on 3 strands]{Decomposition matrices for the generic Hecke algebras on 3 strands in  characteristic 0}
\author{Eirini Chavli}
\address{Institut f\"ur Algebra und Zahlentheorie, Universit\"at Stuttgart, Pfaffenwaldring 57, 
	70569 Stuttgart, Germany\\Office: 8.149\\
	Telephone: +49 (0)711 685 65518.}
\email{eirini.chavli@mathematik.uni-stuttgart.de}
\begin{document}
\maketitle
\begin{abstract}
	\noindent
	We determine all the decomposition matrices of the generic Hecke algebras on 3 strands in characteristic 0. These are the generic Hecke algebras associated with the exceptional complex reflection groups $G_4$, $G_8$, and $G_{16}$. We prove that for every choice of the parameters that define these algebras, all simple representations of the specialized algebra are obtained as modular reductions of simple representations of the generic algebra.
\end{abstract}
	
\section{Introduction}
	Between 1994 and 1998, M. Brou\'e, G. Malle, and R. Rouquier  
	generalized the definition of the Iwahori-Hecke algebra to the case of an arbitrary complex reflection group $W$ (see \cite{bmr}). This generalized algebra, which we denote by $H_W$, is known as the \emph{generic Hecke algebra}. It is defined over the Laurent polynomial ring $R:=\ZZ[u_1^{\pm},\dots, u_k^{\pm}]$, where $\{u_i\}_{1\leq i\leq k}$ is a set of parameters whose cardinality depends on $W$. In 1999, G. Malle proved that $H_W$ is split semisimple when defined over the field $\CC(v_1,\dots, v_k)$, where each parameter $v_i$ is an $N_i$-th root of $u_i$ for some specific $N_i\in \mathbb{N}$ (see \cite{malle1}, theorem 5.2). As a result, Tits' deformation theorem yields a bijection between the set of simple
	characters of $H_W$ and the set Irr($W$) of simple characters of $W$.
	
However, for many of the applications of the Hecke algebras it is important to know how the simple characters behave after specializing the parameters $u_i$ to arbitrary complex numbers. If the specialized Hecke algebra is semisimple, Tits' deformation theorem still applies; the simple characters of the specialized Hecke algebra are parametrized again by Irr$(W)$. However, this is not always the case. 
	
	If the specialized algebra is not semisimple, one needs to take a different approach.
	The simple characters of
	the semisimple Hecke algebra may not remain simple after specialization, however they are a linear combination of simple characters of the specialized algebra. 
		One can define the \emph{decomposition matrix}, which
	records the coefficients of this linear combination. 
		The rows and the columns of the decomposition matrix are indexed by simple characters; the rows by the ones of the generic algebra and the columns by the simple characters of the specialized algebra. This matrix offers a  depiction of the representation theory of the specialized algebra.

In 2011 M. Chlouveraki and H. Miyachi dealt with the cyclotomic Hecke algebras for $d$-Harish-Chandra series of rank 2 (see \cite{chlouveraki}). In this case, each algebra depends only on one parameter. Considering all values of the parameters for which the  algebras are not semisimple they managed to determine the decomposition matrices for these cases and they showed that they follow some specific matrix models. At this point, a number of questions arise: Are there any other matrix models  for the cyclotomic case outside the $d$-Harish-Chandra series that are not described by M. Chlouveraki and H. Miyachi? What happens in the generic case, where there is more than one parameter? Can one provide the decomposition matrix for the generic case without having a specific specialization?

Let $\gru: R\rightarrow \mathbb{C}$ be a specialization of $H_W$ and let
$n$ be the number of the simple characters of
the semisimple Hecke algebra and $m$ the number of the simple characters of the specialized algebra $H_W\otimes_{\gru}\mathbb{C}$. The main theorem of this paper is the following:
\begin{thm}
	Let $W$ be one of the exceptional groups $G_4$, $G_8$ and $G_{16}$. We have $m\leq n$. Moreover, one can reorder the rows of the decomposition matrix associated with $\gru$, so that it takes the following form:\newpage
	$$
	\tiny{\begin{blockarray}{cccccccc}
		\begin{block}{c[ccccccc]}
		&&&\\
		&&&\\
		&&\raisebox{-10pt}{{\huge\mbox{{$I_{m}$}}}}&\\[-3ex]
		&&&\\
		&&&\\
		&&&\\
		\cmidrule[0.00001cm](lr){2-7}
		&*&*&*\\
		&*&*&*\\
	&\vdots&\vdots&\vdots\\
	&*&*&*\\
		\end{block}
		\end{blockarray}}
	$$ 
	where $*$ denotes a placeholder for one of three values: 0, 1 or 2.
\end{thm}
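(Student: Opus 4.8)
The plan is to reduce the global statement to a block-by-block analysis and then to settle each block by an explicit computation with the known irreducible representations of $H_W$. First I would dispose of the inequality $m\leq n$: the decomposition matrix $D$ is the matrix of the decomposition map $d_\gru$ from the Grothendieck group of the split semisimple generic algebra to the Grothendieck group of the specialized algebra $H_W\otimes_\gru\CC$. Since $H_W$ is a symmetric algebra that is free of finite rank over $R$ and $\gru$ can be factored through a discrete valuation ring with residue field $\CC$, the decomposition map is surjective; hence $D$ has full column rank $m$, and as $D$ has only $n$ rows this forces $m\leq n$. This already yields the first assertion independently of the shape of the matrix.

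For the shape, the essential input is that $D$ is block diagonal with respect to the blocks of $H_W\otimes_\gru\CC$, and that these blocks are controlled by the Schur elements. I would start from Malle's explicit formulas for the Schur elements of $G_4$, $G_8$ and $G_{16}$, which factor into products of cyclotomic-type monomial factors. Two generic characters can become linked after specialization only if the corresponding Schur elements acquire a common vanishing factor at $\gru$; running over all primes of $R$ through which such a factor can vanish produces the finite list of \emph{essential hyperplanes}, and from it one reads off, for an arbitrary $\gru$, the partition of $\mathrm{Irr}(W)$ into blocks. The point I would stress is that for these three rank $2$ groups the non-trivial blocks are small and of only a few combinatorial types, so that the problem becomes the determination of finitely many local decomposition matrices.

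Next I would treat each block with the explicit three-strand model of $H_W$: every generic simple module is given by explicit matrices for the two braid generators over $R$, so specializing along $\gru$ and computing the composition factors of the reduction is a finite, if delicate, linear-algebra task. For each block one has to (i) exhibit, for every simple module $S_j$ of the specialized block, a generic simple module that reduces irreducibly to $S_j$ — this produces the rows forming $I_m$ — and (ii) show that every remaining generic simple module in the block decomposes with all multiplicities at most $2$. Part (i) is an existence-of-basic-set statement with the extra feature that the basic set consists of honest characters reducing to a single simple, and for the small blocks arising here it can be checked directly; part (ii) is where the value $2$ in the statement is actually produced.

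The main obstacle will be precisely this last uniform bound: a priori the composition of decomposition maps coming from several essential hyperplanes meeting at $\gru$ could inflate the multiplicities, so I would need to show that at every specialization the relevant blocks are governed by at most two coalescing factors, keeping each $d_{ij}\in\{0,1,2\}$. Once each block is pinned down, assembling the global matrix is routine: reordering the rows so that the liftable characters come first places the union of the block identities as the top $I_m$, and all remaining rows carry entries in $\{0,1,2\}$, which is exactly the claimed normal form. I expect the delicate points to be the complete enumeration of block types for $G_{16}$, whose quintic relation gives the largest parameter set, and the verification that no multiplicity exceeds $2$ there.
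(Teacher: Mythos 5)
Your overall plan --- reduce to a finite case analysis governed by the Schur elements and settle each case by computing composition factors of the explicit matrix models --- is in the same spirit as the paper, which proceeds degree by degree using necessary and sufficient simplicity criteria for the $2$-, $3$-, $4$- and $5$-dimensional modules (Propositions \ref{2ir}, \ref{pr3d}, \ref{dim4} and \ref{d5}) together with the unique expression of specialized characters as $\NN$-combinations of simple ones (Criterion \ref{c6}). But there is a genuine gap at the very first step. The claim that the decomposition map is surjective ``since $H_W$ is a symmetric algebra free of finite rank over $R$ and $\gru$ factors through a discrete valuation ring'' is unjustified: surjectivity of $d^{\gru}$ (equivalently, full column rank of $D$, equivalently non-singularity of the Cartan matrix of $\CC H_{W_k}$) is \emph{not} a formal consequence of these hypotheses. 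The sufficient condition available at this level of generality is reduction-stability of the center (Theorem 7.5.6 in \cite{geck}), and the paper explicitly remarks that the center of $H_{W_k}$ is unknown, so that this condition cannot be checked. In the paper $m\leq n$ is a \emph{conclusion} of the exhaustive computation --- it follows from the existence of an optimal basic set, which injects $\text{Irr}(\CC H_{W_k})$ into $\text{Irr}(W_k)$ --- not an input; assuming surjectivity up front begs essentially the question being asked.

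The second problem is that everything constituting the actual content of the theorem is deferred: the existence, for every specialization, of a generic simple character reducing irreducibly to each specialized simple (the $I_m$ part, i.e.\ the optimal basic set), and the uniform bound of $2$ on the remaining entries, are only announced as things ``to be checked directly.'' The paper's proof \emph{is} that finite but long verification: the simplicity criteria above, Corollary \ref{c2m} guaranteeing a simple $2$-dimensional character for every specialization, and the complete case analysis of Sections \ref{s3333}--\ref{sixi}. A further inaccuracy in your reduction: linkage of two generic characters after specialization is not detected by a ``common vanishing factor'' of their Schur elements. Vanishing of both Schur elements and equality of the central characters on $z_0=(s_1s_2)^3$ are only \emph{necessary} conditions for lying in the same block (the paper stresses this when treating $G_{16}$), and actual linkage must be confirmed by decomposing characters; so the enumeration of ``block types'' as you describe it is not guaranteed to be complete, which matters precisely for the multiplicity-$2$ entries you single out as the delicate point.
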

The rows of the upper part of the matrix are indexed by a subset of Irr($W$), which is called by M. Chlouveraki and H. Miyachi  \emph{optimal basic set}   (see Definition 2 in \cite{chlouveraki}).
The appearance of the identity matrix implies that all simple representations of the specialized algebra are obtained as modular reductions of simple representations of the generic algebra. This result does not hold in general when we study modular representation theory (see, for example, \S 18.6 in \cite{serre2012linear} or Exercise 8.9 (e) in \cite{geck} as counterexamples). Moreover, the fact that $m<n$ (for the non-semisimple case) may have as explanation that the center of $H_W$ is \emph{reduction stable} (see Theorem 7.5.6 in \cite{geck}). However, the center of $H_W$ is not known yet, and such a condition could not be checked. \\\\
\textbf{Acknowledgments.} I would like to thank T. Conde,  M. Geck, S. K\"onig, J. K\"ulshammer and U. Thiel for fruitful discussions and references. Moreover, I would like to thank J. Michel for suggesting working with powers of the parameters in GAP, and M. Chlouveraki and G. Pfeiffer for carefully reading this paper.
\section{Preliminaries}
\subsection{Generic Hecke algebras on 3 strands}
A complex reflection group $W$ on a finite dimensional $\CC$-vector space $V$ is a finite subgroup of $GL(V)$, which is generated by pseudo-reflections, namely elements of finite order whose vector space of fixed points is a hyperplane.
We denote by $B_W$ the complex braid group associated with $W$, in the sense of Brou\'e-Malle-Rouquier (see \cite{bmr}).  A pseudo-reflection $s$ is called \emph{distinguished} if its only nontrivial eigenvalue on $V$ equals $\exp(-2\grp \gri /e_s)$, where  $\gri$ denotes a chosen imaginary unit, and $e_s$ the order of $s$ in $W$. To every distinguished pseudo-reflection $s$, one can associate homotopy classes in $B_W$ that are called \emph{braided reflections}  (for more details one may refer to \cite{bmr}). 

Let $R_W$ denote the Laurent polynomial ring $\ZZ[u_{s,i},u_{s,i}^{-1}]$, where $s$ runs over the set of the  distinguished pseudo-reflections of $W$, $1\leq i\leq e_s$, and  $u_{s,i}=u_{t,i}$ if $s$ and $t$ are conjugate in $W$. The \emph{generic Hecke algebra} $H_W$ associated with $W$ with parameters $(u_{s,1},\dots, u_{s,e_s})_s$ is the quotient of the group algebra $R_WB_W$ of $B_W$ by the ideal generated by the elements of the form
$$
(\grs-u_{s,1})(\grs-u_{s,2})\dots (\grs-u_{s,e_s}),$$
where $s$ runs over the conjugacy classes of the set of the  distinguished pseudo-reflections of $W$
 and $\grs$ over the set of braided reflections associated with $s$. It is enough to choose one such relation per conjugacy class, since  the  braided reflections associated with the same distinguished pseudo-reflection are conjugate in $B_W$.

 Let $B_3$ be usual braid group on 3 strands, with presentation given by generators the braids $s_1$ and $s_2$ and the (single)
 relation $s_1s_2s_1 = s_2s_1s_2$. We denote by $W_k$ the quotients of $B_3$ by the additional relation $s_i^k=1$, for $i=1,2$.
 Coxeter has shown that these quotients are finite if and only if $k\in\{2,3,4,5\}$ (see \textsection10 in \cite{Coxeter}). Apart from $k=2$,  which leads to the symmetric group $S_3$, we encounter for $k=3,4,5$ three exceptional complex reflection groups respectively, known as $G_4$, $G_8$, and $G_{16}$ in the Shephard-Todd classification (see \cite{shephard}). 
 We define as \emph{the generic Hecke algebras on 3 strands} the algebras $H_{W_k}$, $k=2,3,4,5$.
 \begin{rem}
 	By the relation $s_1s_2s_1 = s_2s_1s_2$ we have that $s_1$ and $s_2$ are conjugate in $W_k$ and, hence, the algebra $H_{W_k}$ is defined over the ring 
 	$R_{W_k}=\ZZ[u_1^{\pm1},...,u_k^{\pm1}]$. 
 \end{rem}
The algebra $H_{W_2}$ is an Iwahori-Hecke algebra and its modular representation theory has been studied in \cite{geck}. Hence, from
now on only the cases $k=3,4,5$ will be considered.
 The following theorem is Theorem 1.1 in \cite{chavli} and proves the \emph{BMR freeness conjecture} for the generic Hecke algebras on 3 strands. 
 	\begin{thm}$H_{W_k}$ is a free $R_{W_k}$-module of rank $|W_k|$.
 		\label{bmr}
 	\end{thm}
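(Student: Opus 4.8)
The plan is to bound the rank of $H_{W_k}$ from above and below and then conclude by a standard flatness argument. Write $R=R_{W_k}$ and let $F=\mathrm{Frac}(R)$ be its fraction field. Since the constant term of the defining polynomial $\prod_{j=1}^{k}(s_i-u_j)$ is the unit $(-1)^k u_1\cdots u_k$, each generator $s_i$ is invertible in $H_{W_k}$, and the defining relation lets us rewrite $s_i^{k}$ (and hence $s_i^{-1}$) as an $R$-linear combination of $1,s_i,\dots,s_i^{k-1}$. The aim is to exhibit an explicit $R$-spanning set $\mathcal B$ of cardinality exactly $|W_k|$ and then to upgrade it to a free basis.

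\textbf{Upper bound (spanning).} I would fix a set $\mathcal B=\{b_w:w\in W_k\}$ of words in $s_1^{\pm1},s_2^{\pm1}$ lifting the elements of the finite group $W_k$, chosen in a normal form adapted to the braid relation $s_1s_2s_1=s_2s_1s_2$ (for instance a Garside-type normal form for $B_3$, truncated using $s_i^{k}=(\text{lower-order terms})$); by construction $|\mathcal B|=|W_k|$. Let $U=\sum_{w\in W_k}R\,b_w$. To prove $U=H_{W_k}$ it suffices to check that $1\in U$ and that $U$ is stable under left multiplication by $s_1,s_2$ and their inverses, since these generate $H_{W_k}$ as an algebra. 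Stability is verified word by word: multiplying a normal-form word on the left by a generator, one uses the braid relation to push the product back toward the chosen normal form and the degree-$k$ relation to collapse any exponent that reaches $k$. This is the combinatorial heart of the argument, and the point is to choose the normal form so that these reductions terminate and never create more than $|W_k|$ distinct words.

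\textbf{Lower bound and freeness.} By Malle's theorem the algebra $H_{W_k}\otimes_R\CC(v_1,\dots,v_k)$ is split semisimple, and by Tits' deformation theorem its simple characters are indexed by $\mathrm{Irr}(W_k)$; hence its dimension over $\CC(v)$ equals $\sum_{\chi\in\mathrm{Irr}(W_k)}\chi(1)^2=|W_k|$. As $\CC(v)$ is a field extension of $F$ and dimension is preserved under base change, $\dim_F(H_{W_k}\otimes_R F)=|W_k|$. The spanning set $\mathcal B$ gives a surjection $R^{|W_k|}\twoheadrightarrow H_{W_k}$; tensoring with the flat $R$-module $F$ yields a surjection $F^{|W_k|}\twoheadrightarrow H_{W_k}\otimes_R F$ between $F$-spaces of the same dimension $|W_k|$, hence an isomorphism. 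Thus the kernel of $R^{|W_k|}\to H_{W_k}$ vanishes after tensoring with $F$; being a submodule of the torsion-free module $R^{|W_k|}$ over the domain $R$, it is torsion-free, so it is zero. Therefore $H_{W_k}\cong R^{|W_k|}$ is free of rank $|W_k|$.

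\textbf{Main obstacle.} The only genuinely hard step is the spanning computation, i.e.\ verifying that $U$ is closed under left multiplication by the generators while staying within $|W_k|$ words. The number of reductions grows with $|W_k|$, which is $24$, $96$ and $600$ for $k=3,4,5$; for $G_{16}$ the bookkeeping is large enough that I would carry it out with computer algebra, working with the powers of the parameters in GAP. Everything else — the dimension count via Malle's semisimplicity and the passage from a spanning set to a free basis — is formal.
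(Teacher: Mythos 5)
The paper does not actually prove this statement: Theorem \ref{bmr} is quoted from Theorem 1.1 of \cite{chavli}, whose strategy your outline correctly mirrors in broad terms (produce an explicit $R_{W_k}$-spanning set of cardinality $|W_k|$, then upgrade a spanning set of the right size to a basis by a formal argument). The spanning step that you defer to a normal-form computation is the entire content of that reference --- a long inductive case analysis on a proposed list of $|W_k|$ words, with $600$ elements for $G_{16}$ --- so it is fair to flag it as the hard part, but be aware that nothing in your sketch guarantees the reductions "terminate and never create more than $|W_k|$ distinct words"; that is precisely what has to be checked case by case.

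The genuine gap is in your lower bound: the argument as written is circular. Malle's split-semisimplicity theorem (Theorem 5.2 of \cite{malle1}) and Tits' deformation theorem (Theorem 7.4.6 of \cite{geck}) are both established under the standing hypothesis that $H_{W_k}$ is a free $R_{W_k}$-module of rank $|W_k|$; indeed Tits' theorem cannot even be stated for $H_{W_k}\otimes\CC(\mathbf{v})$ without first knowing the algebra is free of the correct rank, and for $G_8$ and $G_{16}$ the freeness conjecture was open when Malle's paper appeared, so his result for these groups was conditional on exactly the statement you are trying to prove. Note also that the cheap alternative --- specializing to the group algebra $\CC W_k$ --- only gives dimension bounds in the wrong direction, since fibre dimension is upper semicontinuous and the generic fibre is the smallest. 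The lower bound $\dim_F(H_{W_k}\otimes_{R_{W_k}}F)\geq|W_k|$ has to come from elsewhere: in \cite{bmr} it is supplied by the monodromy (Knizhnik--Zamolodchikov) construction, which identifies a completed specialization of $H_{W_k}$ with the group algebra over a field through which $R_{W_k}$ injects; alternatively, for these three groups one can use the explicit pairwise non-isomorphic irreducible matrix models of \cite{brouem} together with Burnside's theorem to exhibit $\sum_{\chi}\chi(1)^2=|W_k|$ linearly independent elements of $H_{W_k}\otimes F$. With the lower bound obtained by one of these non-circular routes, your flatness and torsion-freeness argument for concluding that the kernel of $R_{W_k}^{|W_k|}\twoheadrightarrow H_{W_k}$ vanishes is correct.
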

 	For the rest of this paper we make the following assumption, which has been conjectured in \cite{bmm} for every generic Hecke algebra associated with a complex reflection group.
 	\begin{as} $H_{W_k}$ has a unique symmetrizing trace $t_k: H_{W_k} \arw R_{W_k}$ having the properties described in \cite{bmm}, Theorem 2.1.
 		\label{sym}
 		\end{as}
 		 The assumption is known for $k=3$
 		 (see \cite{mallem} or  \cite{mw}). Moreover, after this work was completed, the author, together with  Boura, Chlouveraki, and Karvounis, provided another proof for $k=3$ and  proved the assumption for $k=4$ as well (see \cite{BCCK}).
 \subsection{Schur elements}
 \label{s}We denote by $\mathcal{K}_{W_k}$, $k=3,4,5$,  the splitting field of $W_k$,
 We denote by $\grm(\mathcal{K}_{W_k})$ the group of all roots of unity of $\mathcal{K}_{W_k}$ and, for every integer $m>1$, we set $\grz_m:=$exp$(2
 \grp \gri/m)$, where $\gri$  denotes a square root of -1.
 
 Let $\mathbf{v}=(v_1,...,v_k)$ be a set of $k$ indeterminates such that, for every $i\in\{1,...,k\}$, we have $v_i^{|\grm(\mathcal{K}_{W_k})|}=\grz_k^{-i}u_i$. By extension of scalars we obtain the algebra
 $\CC(\mathbf{v})H_{W_k}:=H_{W_k}\otimes_{R_{W_k}}\CC(\mathbf{v})$, which is split semisimple (see \cite{malle1}, theorem 5.2). Hence, by Tits' deformation theorem (see Theorem 7.4.6 in \cite{geck}) the specialization $v_i \mapsto 1$ induces a bijection
 Irr$(\CC(\mathbf{v})H_{W_k})\rightarrow$ Irr$(W_k)$, $\grx_k\mapsto \grx$.  By Theorem 7.2.6 in \cite{geck} we have:
 $$t_k=\sum\limits_{\grx\in\text{Irr}(W_k)}\frac{1}{s_{\grx}} \grx_k$$
 where $s_{\grx}$ denotes the \emph{Schur element} of $H_{W_k}$ associated with $\grx\in\text{Irr}(W_k)$, with respect to $t_k$.
 
Chlouveraki has shown that these elements are products of cyclotomic polynomials over $\mathcal{K}_{W_k}$, evaluated on monomials of degree 0 (see Τheorem 4.2.5 in \cite{chlouverakibook}).  One can refer to Michel's version of CHEVIE package of
GAP3 (see \cite{michelgap}) for this factorization. 
\begin{ex}
We consider the case of $G_4$, which  we denote in this paper as $W_3$. In CHEVIE the parameters must be in $Mvp$ form (which stands for \emph{multivariate polynomials}). We type:
\footnotesize{\begin{verbatim}
gap> W_3:=ComplexReflectionGroup(4);;
gap> CharNames(W_3);
[ "phi{1,0}", "phi{1,4}", "phi{1,8}", "phi{2,5}", "phi{2,3}", "phi{2,1}", "phi{3,2}"]
\end{verbatim}}\normalsize
\noindent
We see that $W_3$ admits 7 simple characters, known as $\grf_{i,j}$ in GAP notation, with $i$ denoting the degree and $j$ the fake degree of the representation. 
We find now the factorization of the Schur element of $H_{W_3}$ associated with the character $\phi_{1,4}$, which is the second element of the above list. We type:
\footnotesize{\begin{verbatim}
gap> S:=FactorizedSchurElements(Hecke(W_3,[[Mvp("u1"), Mvp("u2"), Mvp("u3")]]);;
gap> S[2];
-u1^-4u2^5u3^-1P2(u1u2^-2u3)P1P6(u1u2^-1)P1P6(u2u3^-1)
\end{verbatim}}\normalsize
\noindent
This means that 
$$s_{\phi_{1,4}} = -u_1^{-4}u_2^5u_3^{-1}\grF_2(u_1u_2^{-2}u_3)\grF_1(u_1u_2^{-1})\grF_6(u_1u_2^{-1})\grF_1(u_2u_3^{-1})\grF_6(u_2u_3^{-1}),$$
where $\grF_1$, $\grF_2$ and $\grF_6$ denote the cyclotomic polynomials $x-1$, $x+1$, and $x^2-x+1$, respectively.
\qed
\label{exg}
\end{ex}
\subsection{Decomposition matrix}
\label{sos}
Let $\gru: R_{W_k}\arw \mathbb{C}$ be a ring homomorphism, such that $\gru(u_i)\in \mathbb{C}^{\times}$, for every $i\in\{1,\dots,k\}$. We call $\gru$ a
\emph{specialization} of $R_{W_k}$. We set $\CC H_{W_k}:=H_{W_k}\otimes_{\gru}\CC$. This algebra is split, since it is a $\CC$-algebra (and, hence, assumption 7.4.1 (a) in \cite{geck} is satisfied). 

We suppose now that $\CC H_{W_k}$ is also semisimple. According to Theorem 7.5.11 in \cite{geck}, or to Lemma 2.6 in \cite{maller} this algebra is semisimple if and only if  $\gru(s_{\chi})\not=0$, for every $\chi \in\text{Irr}(\mathbb{C}(\textbf{v})H_{W_k})$.
 Using Tits' deformation theorem again, we obtain a canonical bijection between  the set of simple characters of $\CC H_{W_k}$ and  the set of simple characters of $\CC(\mathbf{v})H_{W_k}$,  which are in bijection with
the simple characters of $W_k$, as we mentioned in section \ref{s}. 

We now examine the
behavior of the simple representations of $\CC H_{W_k}$ in the non-semisimple case.
Let $R_0^{+}\big(\CC(\mathbf{v})H_{W_k}\big)$ (respectively $R_0^{+}(\CC H_{W_k})$) denote the subset of the \emph{Grothendieck group} of the category of finite dimensional $\CC(\mathbf{v})H_{W_k}$ (respectively $\CC H_{W_k})$-modules, which consists of elements $[V]$, where $V$ is a finite-dimensional $\CC(\mathbf{v})H_{W_k}$ (respectively $\CC H_{W_k})$-module, with relations $[V]=[V']+[V'']$, for each exact sequence $0\rightarrow V'\rightarrow V\rightarrow V'' \rightarrow 0$ (for more details, one may refer to \textsection 7.3 in \cite{geck}). By theorem 7.4.3 in \cite {geck} we obtain a well-defined decomposition map $$d^{\gru}: R_0^{+}\big(\CC(\mathbf{v})H_{W_k}\big) \arw R_0^{+}(\CC H_{W_k}).$$

The \emph{decomposition matrix} associated with the map $d^{\gru}$ is the
Irr$\big(\CC(\mathbf{v})H_{W_k}\big)\times$ Irr$(\CC H_{W_k})$ matrix $(d^{\gru}_{\grx\grf})$ with non-negative integer entries such that 
$$d^{\gru}([V_{\grx}])=\sum\limits_{\grf}d^{\gru}_{\grx\grf}[V'_{\grf}]\;\;\;\text{ for }\grx\in \text{Irr}(\CC(\mathbf{v})H_{W_k}),$$ where $V_{\chi}$ is a simple $\CC(\mathbf{v})H_{W_k}$-module of with character $\grx$ and $V'_{\grf}$  a simple $\CC H_{W_k}$-module of character $\grf$.
 
	We say that the characters $\grx, \grx' \in\text{Irr}(\CC(\mathbf{v})H_{W_k})$
 \emph{belong to the same block} if $\grx\not=\grx'$ and there is a chain of characters $\chi=\chi_1,\chi_2,\dots, \chi_n=\chi'$, where for every two neighbors $\grx_i$, $\grx_{i+1}$ there is a character $\grf_i\in\text{Irr}(\CC H_{W_k})$ such that $d^{\gru}_{\grx_i,\grf_i}\not=0\not=d^{\gru}_{\grx_{i+1},\grf_i}$.
	If the character $\grx\in\text{Irr}(\CC(\mathbf{v})H_{W_k})$  is alone in its block, then we call it a \emph{character of defect 0}.
	\subsection{Optimal basic sets}
	We recall that in the semisimple case, the simple representations of $\CC H_{W_k}$ are parametrized by the simple representations of $W_k$. The definition of \emph{optimal basic sets}, in the sense of Chlouveraki-Miyachi (see \cite{chlouveraki}), give ways to parametrize simple representations of $\CC H_{W_k}$
	in the non-semisimple case.
	\begin{defn}
		An optimal basic set $\mathcal{B}^{opt}$ for $\CC H_{W_k}$ with respect to $\gru$ is a subset of $\text{Irr}(W_k)$ such that the following two conditions are satisfied:
		\begin{enumerate}
			\item For all $\grf\in\text{Irr}(\CC H_{W_k})$ there exists $\grx_{\grf}\in\mathcal{B}^{opt}$ such that 
				\begin{enumerate}
					\item $d^{\gru}_{\grx_{\grf},\grf}=1$,
					\item If $d^{\gru}_{\grc,\grf}\not=0$ for some $\grc\in \text{Irr}(W_k)$, then either $\grc=\grx_{\grf}$ or $\grc\not \in \mathcal{B}^{opt}$.
					\end{enumerate}
					\item The map Irr$(\CC H_{W_k})\rightarrow\mathcal{B}^{opt}$, $\grf\mapsto \grx_{\grf}$ is a bijection.
		\end{enumerate}
	\end{defn}
	 Hence, $\mathcal{B}^{opt}$ is a set of characters of the
	generic algebra which remains irreducible upon specialization and has
	the further property that every other specialized character can be written
	as a sum of the specialized characters in $\mathcal{B}^{opt}$. If such a set exists, then  the decomposition matrix is 
	upper identity, with  the identity part consisting of the rows indexed by the elements of $\mathcal{B}^{opt}$. The existence of an optimal basic set implies that all simple modules are obtained after specialization of simple representations of the generic algebra.
	\begin{rem}
		If the algebra $\CC H_{W_k}$ is semisimple then $\mathcal{B}^{opt}=$Irr$(W_k)$.
	\end{rem}
	The rest of this paper is devoted to the proof of the existence of an optimal basic set for $\CC H_{W_k}$, $k=3,4,5$, in the non-semisimple case, with respect to any specialization $\gru$. 
	\section{The decomposition matrices of $W_k$}
	\subsection{Notation} Following the notation in GAP, we denote by $E(n)$, $n\in \NN$, the primitive $n$th root of unity $\exp(2\grp i/n)$. 
	\subsection{Methodology}\label{crrrrr} Motivated by the idea of Chlouveraki and Miyachi in \cite{chlouveraki} \textsection 3.1 we use the following criteria in order to calculate the decomposition matrix for $W_k$, $k=3,4,5$. We have also used some of these criteria in \cite{chavli}, in order to classify the simple representations of $B_3$ of small dimension.
	\begin{crr}
		Every 1-dimensional $\CC H_{W_k}$-module $V'_{\grf}$ is simple. Moreover, there is an 1-dimensional character $\grx\in\text{Irr}(\CC(\mathbf{v})H_{W_k})$, such that $d^{\gru}([V_{\grx}])=[V'_{\grf}]$.
		\label{c1}
	\end{crr}
	\begin{proof}
 By definition, the algebra $\CC H_{W_k}$ is the quotient of the group algebra $\CC B_3$ by the relations $(s_i-\gru(u_1))\dots (s_i-\gru(u_k))=0$, $i=1,2$. As a result,  
			the 1-dimensional $\CC H_{W_k}$-modules are of the form $s_1, s_2 \mapsto (\gru(u_j))$, $j=1,\dots,k$.
		\end{proof}
		\begin{crr}
		 2-dimensional modules are not simple if and only if they admit  1-dimensional submodules.
		 \label{c2}
		 \end{crr}
		We recall that $s_{\grx}$ denotes the Schur element associated with $\grx$. The next criterion summarized the results of Geck-Pfeiffer (see \cite{geck}, Theorem 7.5.11) and Malle-Rouquier (see \cite{maller}, Lemma 2.6).
		\begin{crr}
			$\gru(s_{\grx})\not =0$ if and only if $V_{\grx}$ is a simple module of defect 0. 
\label{c4}
\end{crr}
The next criterion follows directly from Lemma 7.5.10 in \cite{geck}.
\begin{crr}
 If $V_{\grx}, V_{\grc}$ are in  the same block, then $\gru(\grv_{\grx}(z_0))=\gru(\grv_{\grc}(z_0))$, where $\grv_{\grx}, \grv_{\grc}$ are the corresponding \emph{central characters} and $z_0$ is the central element $(s_1s_2)^3$.
 \label{c5}
 \end{crr}
 \begin{crr}
 Modular restrictions of the simple characters of $\CC(\mathbf{v}) H_{W_k}$
can be written uniquely as $\NN$-linear combinations of the simple characters of $\CC H_{W_k}$.	
\label{c6}	
	\end{crr}
	\begin{proof} Every $\CC(\mathbf{v}) H_{W_k}$-character can be written as $\NN$-linear combination of the simple characters of $\CC H_{W_k}$.	It remains to prove that the simple characters of $\CC H_{W_k}$ are linearly independent. Since the algebra $\CC H_{W_k}$ is split, the linear independence follows directly from Lemma 4.36 in \cite{meliot}.
		\end{proof}
		Notice that the above criteria can be used for any finite dimensional, symmetric algebra defined over a field. The following propositions are applied only to the generic Hecke algebras on 3 strands and give us a necessary and sufficient condition for a 2-dimensional, a 3-dimensional, and a 4-dimensional  $\CC H_{W_k}$-module to be simple. The last case refers only to $k\in\{3,4\}$, since there are not 4-dimensional simple $\CC H_{W_3}$-modules. Notice also that explicit matrix models for all simple representations are known (see \cite{brouem}) and are stored in the GAP3 package CHEVIE.
		
Let $a_i:=\gru(u_i)$, $i=1,\dots,k$, where $\gru$ is the specialization defined in \ref{sos}. Hence, by definition, $a_i\not=0$.
Let $V^m$, $m\in\{2,3,4\}$, be a simple $m$-dimensional $\CC(\mathbf{v})H_{W_k}$-module. We notice that the coefficients of the matrix models of each $V^m$ are Laurent polynomials with complex coefficients in variables 
$b_1,\dots, b_m$, where $b_i=a_{j_i}$ for some $j_i\in\{1,\dots,k\}$, such that $j_1<j_2<\dots<j_m$.
We denote by $U^m_{b_1,\dots, b_m}$  the $m$-dimensional $\CC H_{W_k}$-module, such that $d^{\gru}([V^m])=[U^m_{b_1,\dots,b_m}]$.  

\begin{prop}
	The  $\CC H_{W_k}$-module $U^2_{b_1,b_2}$ is simple if and only if $b_1^2-b_1b_2+b_2^2\not=0$.
	\label{2ir}
	\end{prop}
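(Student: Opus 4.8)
The plan is to invoke Criterion~\ref{c2}: the module $U^2_{b_1,b_2}$ fails to be simple precisely when it contains a $1$-dimensional submodule, i.e. when the operators $S_1:=\rho(s_1)$ and $S_2:=\rho(s_2)$ of the representation $\rho$ afforded by $U^2_{b_1,b_2}$ share a common invariant line. First I would record that, since $s_1$ and $s_2$ are conjugate, both $S_1$ and $S_2$ have $\{b_1,b_2\}$ as their set of eigenvalues. If $\CC v$ is a common invariant line, write $S_1v=\grl v$ and $S_2v=\grm v$; substituting into the braid relation $S_1S_2S_1=S_2S_1S_2$ gives $\grl^2\grm v=\grm^2\grl v$, hence $\grl\grm(\grl-\grm)=0$. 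As $b_1,b_2$ are among the $a_i$ and thus nonzero, this forces $\grl=\grm\in\{b_1,b_2\}$, so any such submodule is one of the $1$-dimensional characters of Criterion~\ref{c1}. The question therefore reduces to deciding when $S_1$ and $S_2$ have a common eigenvector affording a common eigenvalue.

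Next I would write down the explicit matrix model stored in CHEVIE (see~\cite{brouem}). Up to conjugation it can be normalised to
$$S_1=\begin{pmatrix} b_1 & 1\\ 0 & b_2\end{pmatrix},\qquad S_2=\begin{pmatrix} b_2 & 0\\ -b_1b_2 & b_1\end{pmatrix},$$
where the off-diagonal entry $-b_1b_2$ is the unique scalar for which the braid relation $S_1S_2S_1=S_2S_1S_2$ holds; this is a short direct check. Since reducibility is invariant under conjugation, working with this normal form loses nothing.

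With the matrices in hand the computation is routine linear algebra. Assume first $b_1\neq b_2$, so that $S_1$ is diagonalisable and its only invariant lines are its two eigenlines. The $b_1$-eigenline $\CC(1,0)^{\mathrm T}$ satisfies $S_2(1,0)^{\mathrm T}=(b_2,-b_1b_2)^{\mathrm T}$, which is never proportional to $(1,0)^{\mathrm T}$ because $b_1b_2\neq 0$; hence it is never $S_2$-invariant. The $b_2$-eigenline $\CC(1,b_2-b_1)^{\mathrm T}$ satisfies $S_2(1,b_2-b_1)^{\mathrm T}=(b_2,-b_1^2)^{\mathrm T}$, and this is proportional to $(1,b_2-b_1)^{\mathrm T}$ if and only if $-b_1^2=b_2(b_2-b_1)$, that is, if and only if $b_1^2-b_1b_2+b_2^2=0$. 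In the remaining case $b_1=b_2=:b$ the matrix $S_1$ is a single Jordan block whose unique invariant line $\CC(1,0)^{\mathrm T}$ is again not $S_2$-invariant, so $U^2_{b_1,b_2}$ is simple; this is consistent, since then $b_1^2-b_1b_2+b_2^2=b^2\neq 0$. Combining the two cases yields the claimed equivalence.

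The only genuinely delicate points are fixing the correct normal form of the matrix model and pinning down the off-diagonal scalar from the braid relation, together with treating the coincidence $b_1=b_2$ uniformly; everything else is a direct eigenvector computation. As a sanity check one observes that $b_1^2-b_1b_2+b_2^2=b_2^2\,\grF_6(b_1b_2^{-1})$, so the simplicity criterion is governed by the same cyclotomic factor $\grF_6$ that appears in the Schur elements (cf.\ Example~\ref{exg} and Criterion~\ref{c4}), which is exactly what the defect-zero characterisation predicts.
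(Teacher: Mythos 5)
Your reduction via Criterion~\ref{c2} to the existence of a common eigenvector, the preliminary observation that a common eigenvector forces equal eigenvalues, and all of the subsequent linear algebra are correct, and the overall strategy is the same as the paper's (compute the invariant lines of one generator in an explicit matrix model and test them against the other). The one step that does not survive scrutiny is the normalisation: you assert that the CHEVIE model is conjugate to
$$S_1=\begin{pmatrix} b_1 & 1\\ 0 & b_2\end{pmatrix},\qquad S_2=\begin{pmatrix} b_2 & 0\\ -b_1b_2 & b_1\end{pmatrix},$$
and that ``working with this normal form loses nothing.'' For the model the paper actually uses, $A=\left(\begin{smallmatrix}b_1&0\\-b_1&b_2\end{smallmatrix}\right)$, $B=\left(\begin{smallmatrix}b_2&b_2\\0&b_1\end{smallmatrix}\right)$, the space of intertwiners $P$ with $PA=S_1P$ and $PB=S_2P$ is one-dimensional, spanned by $P=\left(\begin{smallmatrix}b_2-b_1&b_2\\-b_1b_2&b_2(b_2-b_1)\end{smallmatrix}\right)$, whose determinant is $b_2\,(b_1^2-b_1b_2+b_2^2)$. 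So the two models are conjugate exactly when $b_1^2-b_1b_2+b_2^2\not=0$: the conjugation you invoke fails precisely on the locus the proposition is about. (There the two modules are genuinely non-isomorphic: yours contains the character $s_i\mapsto b_2$ as a submodule, the paper's contains $s_i\mapsto b_1$.) Your computation therefore establishes that $b_1^2-b_1b_2+b_2^2\not=0$ implies $U^2_{b_1,b_2}$ is simple, but the other direction is not yet proved: reducibility of your normal form when $b_1^2-b_1b_2+b_2^2=0$ is a statement about a module that, at that point, is not known to be (and in fact is not) isomorphic to $U^2_{b_1,b_2}$.

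The gap is easy to close. Either run the identical eigenvector computation on $A$ and $B$ themselves, as the paper does, or observe that your model and the CHEVIE model carry the same character (equal traces of $s_1$, $s_2$, $s_1s_2$ together with equal determinants determine the trace of every word in a two-dimensional representation of $B_3$), hence have the same composition factors by the linear independence of the simple characters of the split algebra $\CC H_{W_k}$ (the fact behind Criterion~\ref{c6}); since simplicity of a two-dimensional module is equivalent to its having a single composition factor, the reducibility loci of the two models coincide and your computation transfers. With either repair the proof is complete, and your closing identification of $b_1^2-b_1b_2+b_2^2$ with $b_2^2\,\grF_6(b_1b_2^{-1})$ is a nice consistency check against Example~\ref{exg}.
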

\begin{proof}
	We prove the case where $k=3$. The other cases can be proven similarly.  The matrix form of the  $\CC H_{W_k}$-module $U^2_{b_1,b_2}$ is the following:
	 \begin{equation*}
	 s_1\mapsto A:=\begin{pmatrix}
	 \phantom{-}b_1 & 0 \\
	 -b_1 & b_2
	 \end{pmatrix},  \;\;\;
	 s_2\mapsto  B:=\begin{pmatrix}
	 b_2 & b_2 \\
	 0 & b_1
	 \end{pmatrix}.
	 \end{equation*}
	 This module is not simple if and only if it  admits 1-dimensional submodules (criterion \ref{c2}). This is equivalent to the fact that the matrices $A$ and $B$ have a common eigenvector. The eigenvalues of $A$ are $b_1$ and $b_2$ with corresponding eigenvectors $v_{b_1}=\begin{pmatrix}
	b_1^{-1}(b_2-b_1)&1
	\end{pmatrix}^\intercal$ and $v_{b_2}=\begin{pmatrix}
	0 &
	1
	\end{pmatrix}^\intercal$. It is easy to check that $v_{b_2}$ is not an eigenvector for $B$. Moreover, we have $Bv_{b_1}=\begin{pmatrix}
	b_1^{-1}b_2^2&
	b_1
	\end{pmatrix}^\intercal=b_1\begin{pmatrix}
	b_1^{-2}b_2^2&
	1
	\end{pmatrix}^\intercal$, which means that $v_{b_1}$ is an eigenvector for $B$ if and only if $b_1^{-2}b_2^2=b_1^{-1}(b_2-b_1)$, which concludes the proof.
	\qedhere 
\end{proof}
The following corollary states that  an optimal basic set for $\mathbb{C}H_{W_k}$ with respect to any specialization $\gru$ contains always a 2-dimensional character.	
\begin{cor} For every specialization $\gru$ at least one $U^2_{b_1, b_2}$ is simple.
	\label{c2m}
\end{cor}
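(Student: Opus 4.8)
The plan is to show that, for any choice of the parameters $b_1, b_2$ (which are nonzero complex numbers, being images under $\gru$ of the invertible elements $u_i$), the condition $b_1^2 - b_1 b_2 + b_2^2 \neq 0$ from Proposition \ref{2ir} is met by \emph{some} admissible pair. First I would recall that a $2$-dimensional simple $\CC(\mathbf{v})H_{W_k}$-module $V^2$ gives rise to a module $U^2_{b_1,b_2}$ whose parameters $b_1, b_2$ are two of the specialized values $a_1,\dots, a_k$ with $a_i = \gru(u_i) \in \CC^\times$. By Corollary \ref{exg}'s surrounding setup (the character list for $W_3$, and the analogous data for $W_4, W_5$), there is more than one $2$-dimensional character, so there are several available pairs $(b_1,b_2)$ drawn from the $a_i$; the claim is that at least one of these pairs satisfies the simplicity criterion.

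The key observation is that $b_1^2 - b_1 b_2 + b_2^2 = 0$ forces a rigid relation between $b_1$ and $b_2$. Since $b_1 \neq 0$, dividing by $b_1^2$ and writing $\lambda := b_2/b_1$ gives $\lambda^2 - \lambda + 1 = 0$, so $\lambda = \grz_6$ or $\lambda = \grz_6^{-1} = \grz_6^5$, i.e. $b_2 = \grz_6^{\pm 1} b_1$. Thus $U^2_{b_1,b_2}$ fails to be simple \emph{only} in the very special situation where the ratio of the two relevant parameters is a primitive sixth root of unity. The strategy is then to argue that these ratio constraints cannot hold simultaneously across all the $2$-dimensional characters: each $2$-dimensional character picks out a different pair $\{b_i, b_j\}$ (equivalently, a different ratio $a_{j}/a_{i}$), and the relations $a_j/a_i = \grz_6^{\pm 1}$ coming from distinct pairs are mutually incompatible, since they would over-determine the ratios among the $a_i$ and force a contradiction with their being arbitrary nonzero complex numbers constrained only through $\gru$.

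Concretely, I would carry this out as follows. First, enumerate the $2$-dimensional characters of $W_k$ for each $k \in \{3,4,5\}$ and read off, from the CHEVIE matrix models, which pair $(b_1,b_2) = (a_{j_1}, a_{j_2})$ labels each one; this is a finite check using the data already invoked in the paper. Second, for a fixed specialization $\gru$, suppose for contradiction that \emph{every} $U^2_{b_1,b_2}$ is non-simple; by Proposition \ref{2ir} this means every associated pair satisfies $b_2 = \grz_6^{\pm 1} b_1$. Third, combine these equations: because the distinct $2$-dimensional characters involve overlapping but distinct index pairs from $\{1,\dots,k\}$, chaining the ratio relations produces an equation on a single ratio $a_p/a_q$ (or on some $a_i$ alone) that cannot be satisfied, yielding the contradiction.

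The main obstacle I anticipate is the combinatorial bookkeeping in the third step: one must verify, case by case for $k=3,4,5$, that the specific collection of parameter pairs attached to the $2$-dimensional characters really is incompatible with all ratios being sixth roots of unity. This requires extracting the exact index pairs $\{j_1, j_2\}$ from the stored matrix models and checking that the resulting system $\{a_{j_2}/a_{j_1} = \grz_6^{\pm 1}\}$ has no common solution in $(\CC^\times)^k$. For $k=3$ this is essentially immediate since there are three $2$-dimensional characters $\grf_{2,1}, \grf_{2,3}, \grf_{2,5}$ whose pairs cover all three ratios $a_2/a_1, a_3/a_1, a_3/a_2$, and these three ratios satisfy a multiplicative compatibility $(a_2/a_1)(a_3/a_2) = a_3/a_1$ that sixth-root constraints violate in at least one case; the larger cases $k=4,5$ follow the same pattern with more characters and hence more ratio relations, making the incompatibility only easier to force. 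Once the contradiction is established, at least one $U^2_{b_1,b_2}$ must be simple, which is exactly the assertion.
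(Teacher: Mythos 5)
Your proposal is correct, but the heart of your argument differs from the one in the paper. The paper splits into two cases on the cardinality of $\{a_1,\dots,a_k\}$: if two parameters coincide, say $a_1=a_2$, then $a_1^2-a_1a_2+a_2^2=a_1^2\neq 0$ and Proposition \ref{2ir} applies at once; if all parameters are distinct, it assumes every pair satisfies $b_1^2-b_1b_2+b_2^2=0$, subtracts the equations for the pairs $(a_1,a_2)$ and $(a_1,a_3)$ to get $a_1=a_2+a_3$, and substitutes back to obtain $a_2^2+a_2a_3+a_3^2=0$, which is incompatible with $a_2^2-a_2a_3+a_3^2=0$ (together they force $a_2a_3=0$). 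You instead observe that $b_1^2-b_1b_2+b_2^2=0$ is exactly $\grF_6(b_2/b_1)=0$, i.e.\ $b_2/b_1=\grz_6^{\pm1}$, and then derive the contradiction from the multiplicative cocycle relation $(a_2/a_1)(a_3/a_2)=a_3/a_1$, since $\grz_6^{\pm1}\cdot\grz_6^{\pm1}\in\{\grz_3^{\pm1},1\}$ can never again equal $\grz_6^{\pm1}$. Your route is arguably more conceptual: it makes the obstruction a statement about sixth roots of unity and absorbs the paper's first case automatically (equal parameters give ratio $1\neq\grz_6^{\pm1}$), whereas the paper's elimination is more elementary but needs the separate cardinality split. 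Both arguments rest on the same unstated combinatorial fact — that among the $2$-dimensional characters of $H_{W_k}$ the labelling pairs include a full triangle on three indices (indeed all $\binom{k}{2}$ pairs occur, matching the counts $3,6,10$ of $2$-dimensional characters for $k=3,4,5$) — and you are right to flag that this must be read off from the CHEVIE matrix models; since the paper's own proof quantifies over ``every $b_1,b_2\in\{a_1,\dots,a_k\}$'' with the same implicit justification, this is not a gap peculiar to your argument.
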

\begin{proof}
We first assume that the set $\{a_1,a_2,\dots,a_k\}$ has cardinality less than $k$. Without loss of generality, we may assume that $a_1=a_2$. Hence, the $\CC H_{W_k}$-module $U^2_{a_{1}, a_{2}}$ is simple, due to Proposition \ref{2ir}.
	
	Suppose now that the set $\{a_1,a_2,\dots,a_k\}$ has cardinality $k$ and that there is a specialization $\gru$ such that all the $\CC H_{W_k}$-modules $U^2_{b_1, b_2}$ are not simple.  Due to Proposition \ref{2ir} we have $b_1^2-b_1b_2+b_2^2=0$, for every $b_1,b_2\in\{a_1,a_2,\dots,a_k\}$.
	 More precisely, take
	$a_1^2-a_1a_2+a_2^2=a_1^2-a_1a_3+a_3^2=0$. Therefore, $a_3^2-a_2^2=a_1(a_3-a_2)$. Since $a_2\not=a_3$ we obtain $a_1=a_2+a_3$. Replacing now $a_1$ to the equality $a_1^2-a_1a_2+a_2^2=0$ we obtain $a_2^2+a_2a_3+a_3^2=0$, which contradicts the fact that $U^2_{a_2, a_3}$ is  simple.
\end{proof}
	\begin{prop}
			The $\CC H_{W_k}$-module $U^3_{b_1,b_2,b_3}$ is simple if and only if $$(b_1^2-b_2b_3)(b_2^2-b_1b_3)(b_3^2-b_1b_2)\not=0.$$
			\label{pr3d}
		\end{prop}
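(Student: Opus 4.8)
The plan is to mirror exactly the strategy used for the 2-dimensional case in Proposition \ref{2ir}: invoke Criterion \ref{c2}'s higher-dimensional analogue (that the module is non-simple precisely when it admits a proper invariant subspace), write down the explicit matrix models for $s_1$ and $s_2$ acting on $U^3_{b_1,b_2,b_3}$, and compute the precise algebraic condition under which a common invariant subspace exists. First I would retrieve from CHEVIE the two $3\times 3$ matrices $A$ and $B$ giving the action of $s_1$ and $s_2$ on the generic module $V^3$, and then specialize the entries via $\gru$ so that the parameters become $b_1, b_2, b_3$. As in the 2-dimensional proof, the diagonalizable matrix $A$ (with distinct eigenvalues $b_1, b_2, b_3$) has a canonical eigenbasis, and the module is non-simple exactly when $A$ and $B$ share a common invariant subspace.

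The key computational step is to characterize the shared invariant subspaces. Since the $b_i$ are the eigenvalues of $A$ and (by the defining quadratic/cubic relations of the Hecke algebra) also of $B$, a common invariant line corresponds to a common eigenvector. I would compute each eigenvector $v_{b_i}$ of $A$ and test, one eigenvalue at a time, whether $Bv_{b_i}$ is again a scalar multiple of $v_{b_i}$; each such test produces a polynomial equation in $b_1,b_2,b_3$, and I expect these to collapse (after using the symmetric structure of the module) into the three factors $b_1^2 - b_2 b_3$, $b_2^2 - b_1 b_3$, $b_3^2 - b_1 b_2$. One must also check the $2$-dimensional invariant subspaces, i.e.\ that there is no common invariant plane: by passing to the transpose (or dual) representation, invariant planes of $(A,B)$ correspond to invariant lines of $(A^\intercal, B^\intercal)$, whose eigenvalues are again the $b_i^{-1}$ up to scalars, so the same three conditions govern this case as well. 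Thus $U^3_{b_1,b_2,b_3}$ fails to be simple if and only if at least one of the three factors vanishes, which is exactly the negation of the stated inequality.

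The main obstacle will be the bookkeeping in the eigenvector computation: unlike the $2\times 2$ case, where a single off-diagonal comparison sufficed, here each eigenvector has two free coordinates and the condition $Bv_{b_i} \parallel v_{b_i}$ yields a pair of equations that must be reconciled, and one must verify that the resulting single polynomial condition is genuinely $b_i^2 - b_j b_\ell$ rather than a spurious factor. I would handle this by exploiting the cyclic symmetry of the matrix models under permuting the parameters, so that computing the condition for one eigenvector and then applying the symmetry yields the other two factors without redoing the full calculation. As the author notes for Proposition \ref{2ir}, it suffices to carry out the argument for one value of $k$ (say $k=3$) and remark that the cases $k=4,5$ are identical, since the relevant matrix models depend only on the three chosen parameters $b_1, b_2, b_3$ and not on the total number $k$.
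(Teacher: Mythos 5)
Your proposal follows essentially the same route as the paper: reduce non-simplicity to the existence of a $1$-dimensional submodule (a common eigenvector of $A$ and $B$) or a $2$-dimensional submodule (equivalently, via the dual $\CC H_{W_k}^{op}$-module, a common eigenvector of $A^\intercal$ and $B^\intercal$), then carry out the eigenvector test on the CHEVIE matrix models exactly as in Proposition \ref{2ir}. The only slip is your parenthetical claim that the eigenvalues of the transposed pair are the $b_i^{-1}$ (they are still the $b_i$, since transposition preserves eigenvalues), but this does not affect the argument, which only requires testing for common eigenvectors of $A^\intercal$ and $B^\intercal$.
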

		\vspace*{-0.7cm}
		\begin{proof}
			In general, a 3-dimensional module is simple if and only if it does not admit 1-dimensional or 2-dimensional submodules. Let $s_1\mapsto A$ and $s_2\mapsto B$ the matrix form of the  $\CC H_{W_k}$-module $U^3_{b_1,b_2, b_3}$.
			 The existence of an 1-dimensional submodule translates into the existence of a common eigenvector for the matrices $A$ and $B$.  
			 
			 Let $DU^3_{b_1,b_2,b_3}:=$Hom$_{\CC}(U^3_{b_1,b_2,b_3},\CC)$ the $\CC H_{W_k}^{op}$-module, with action $(f*h)(u)=f(hu)$, for every $f\in DU^3_{b_1,b_2,b_3}$, $h\in\CC H_{W_k}$ and $u\in U^3_{b_1,b_2,b_3}$. Since $\CC H_{W_k}$ is a finite dimensional algebra defined over a field, the existence of a 2-dimensional $\CC H_{W_k}$- submodule (and, hence, the existence of an 1-dimensional $\CC H_{W_k}$- quotient) yields to the existence of an 1-dimensional $\CC H_{W_k}^{op}$- submodule. As a result, the transposed matrices 
			$A^\intercal$ and $B^\intercal$ must have a common eigenvector.
			
		Summing up, the $\CC H_{W_k}$-module $U^3_{b_1,b_2,b_3}$ is simple if and only if the matrices $A$ and $B$, on one hand, and the matrices	$A^\intercal$ and $B^\intercal$ on the other, do not have a common eigenvector. Since these matrices are stored in the GAP3 package CHEVIE, we can find the eigenvectors of the matrices $A$ and $A^\intercal$ and check under which conditions they are eigenvectors for the matrices  $B$ and $B^\intercal$, respectively. This is the method we used
	 for the proof of Proposition \ref{2ir}. One can check that the aforementioned condition is the one described in the hypothesis.
		\end{proof}	
		\begin{prop}Let $k\not=3$. The $\CC H_{W_k}$-module $U^4_{b_1,b_2,b_3,b_4}$ is simple if and only if $$(b_m^3-b_rb_lb_s)(b_m^2b_r^2+b_1b_2b_3b_4+b_l^2b_s^2)\not=0,$$
			for every $\{m,r,l,s\}=\{1,2,3,4\}$.
			\label{dim4}
		\end{prop}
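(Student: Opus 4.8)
The plan is to follow exactly the strategy already deployed for Propositions \ref{2ir} and \ref{pr3d}, since the criterion for simplicity is structurally identical. A module $U^4_{b_1,b_2,b_3,b_4}$ fails to be simple precisely when it admits a proper nonzero submodule, i.e.\ a $1$-, $2$-, or $3$-dimensional $\CC H_{W_k}$-submodule. As in the proof of Proposition \ref{pr3d}, I would handle these via the dual module $DU^4 = \mathrm{Hom}_\CC(U^4, \CC)$: the existence of a $3$-dimensional submodule is equivalent (by passing to the $1$-dimensional quotient and dualizing) to the existence of a $1$-dimensional $\CC H_{W_k}^{op}$-submodule, hence to a common eigenvector of $A^\intercal$ and $B^\intercal$, where $s_1\mapsto A$, $s_2\mapsto B$ is the explicit matrix model stored in CHEVIE. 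So the $1$- and $3$-dimensional submodule obstructions reduce to a common-eigenvector computation for $(A,B)$ and for $(A^\intercal, B^\intercal)$ respectively — entirely analogous to the lower-dimensional cases.

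The genuinely new feature here is the possibility of a $2$-dimensional invariant subspace, which has no analogue in the $3$-dimensional proposition. This is where I expect the main obstacle to lie. A common eigenvector argument no longer suffices; instead I would look for a common $2$-dimensional invariant subspace $U$ of $A$ and $B$. The natural approach is to parametrize candidate $2$-planes and impose $A$-invariance and $B$-invariance. Concretely, since $A$ and $B$ are explicit, I would compute the eigenspace/generalized-eigenspace decomposition of $A$, enumerate the $2$-dimensional $A$-invariant subspaces (spanned by pairs of eigenvectors, generically), and then impose that each such plane is stable under $B$. This produces a system of polynomial equations in $b_1,b_2,b_3,b_4$ whose solvability is the obstruction. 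Symmetrically, a $2$-dimensional submodule also forces a $2$-dimensional submodule of the dual, giving the $(A^\intercal, B^\intercal)$ version; by the symmetry of the factor $b_m^2b_r^2 + b_1b_2b_3b_4 + b_l^2b_s^2$ under the relevant partitions, I anticipate the two conditions coincide.

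The final task is to verify that the conjunction of all these obstructions (over all $1$-, $2$-, $3$-dimensional invariant subspaces) is equivalent to the vanishing of the stated product
$$\prod_{\{m,r,l,s\}=\{1,2,3,4\}} (b_m^3 - b_r b_l b_s)(b_m^2 b_r^2 + b_1b_2b_3b_4 + b_l^2 b_s^2).$$
I would expect the factors $b_m^3 - b_r b_l b_s$ to arise from the common-eigenvector (dimension $1$ and $3$) conditions and the quadratic-in-products factors $b_m^2 b_r^2 + b_1b_2b_3b_4 + b_l^2 b_s^2$ to arise from the $2$-dimensional invariant-subspace condition. Since the matrices are available in GAP3/CHEVIE, the polynomial elimination can be carried out by computer and cross-checked, exactly as the authors indicate for Proposition \ref{pr3d} (\emph{``Since these matrices are stored in the GAP3 package CHEVIE, we can find the eigenvectors \dots and check under which conditions \dots''}). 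The principal subtlety to watch is degenerate cases — when some $b_i$ coincide or when $A$ fails to be diagonalizable — which must be treated separately to ensure the enumeration of invariant subspaces is complete; these boundary cases are where an otherwise routine computation could hide a genuine gap.
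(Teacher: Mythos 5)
Your proposal is correct in outline and matches the paper's strategy for the $1$- and $3$-dimensional obstructions exactly: both are reduced to common-eigenvector conditions for $(A,B)$ and $(A^\intercal,B^\intercal)$ respectively, yielding the factors $b_m^3-b_rb_lb_s$, and the final elimination is done by computer. Where you diverge is precisely at the step you flag as the main obstacle, the $2$-dimensional submodules. The paper does not enumerate $A$-invariant $2$-planes via the eigenspace decomposition of $A$. Instead, it writes a basis $d_1,d_2$ of a putative submodule $S$ in coordinates $x,y\in\CC^4$, notes that $s_1d_1=\gra d_1+\grb d_2$ and $s_1d_2=\grg d_1+\grd d_2$ give $(A-\gra I_4)(A-\grd I_4)y=\grb\grg\, y$, and likewise for $B$; so the existence of a $2$-dimensional submodule is recast as the existence of scalars $\gra,\grd,\gra',\grd'$ for which $(A-\gra I_4)(A-\grd I_4)$ and $(B-\gra' I_4)(B-\grd' I_4)$ share a common eigenvector, and Maple eliminates the auxiliary scalars to produce the factors $b_m^2b_r^2+b_1b_2b_3b_4+b_l^2b_s^2$. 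This device is uniform in the Jordan structure of $A$ restricted to $S$, so the degenerate cases you rightly worry about (coincident $b_i$, non-diagonalizable restriction) never need separate treatment; the price is an elimination over four extra parameters. Your route would also work but carries exactly the completeness burden you identify, and your additional dualization of the $2$-dimensional condition, while true, is redundant (the paper uses duality only to convert $3$-dimensional submodules into $1$-dimensional ones on the opposite algebra). Your anticipated assignment of the two families of factors to the two types of obstruction agrees with the paper.
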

		\begin{proof}
			In general, a 4-dimensional module is simple if and only if it does not admit 1-dimensional, 2-dimensional or 3-dimensional submodules. Let $s_1\mapsto A$ and $s_2\mapsto B$ the matrix form of the  $\CC H_{W_k}$-module $U^4_{b_1,b_2, b_3, b_4}$. The existence of an 1-dimensional submodule translates into the existence of a common eigenvector for the matrices $A$ and $B$. As in proof of Proposition \ref{pr3d}, the existence of a 3-dimensional submodule translates now into the existence of a common eigenvector for the transposed matrices 
			$A^\intercal$ and $B^\intercal$.
			Following the proof of Proposition \ref{2ir}, we conclude that there are 1-dimensional or 3-dimensional submodules if and only if $b_m^3-b_rb_lb_s=0$, with $m,r,l,s$ as in hypothesis.
			
			 Let now $S$ be a 2-dimensional $\CC H_{W_k}$- submodule of $U^4_{b_1,b_2,b_3,b_4}$. As  $\CC$-vector spaces, let $U^4_{b_1,b_2,b_3,b_4}=\langle c_1, c_2, c_3, c_4\rangle$ and $S=\langle d_1, d_2\rangle$. We write $d_1$ and $d_2$ as $\CC$-linear combinations of $c_1,\dots, c_4$ and we have $d_1=\sum\limits_{i=1}^4x_ic_i$ and $d_2=\sum\limits_{i=1}^4y_ic_i$. 
			Since $s_1d_1 \in S$, there are $\gra$, $\grb$ such that $
			\sum\limits_{i=1}^4x_i(s_1c_i)=\gra d_1 +\grb d_2$. Let $A:=(a_{ij})$. We have 
			$\sum\limits_{i=1}^4x_i(\sum\limits_{j=1}^4a_{ji}c_j)=\gra \sum\limits_{i=1}^4x_ic_i+\grb \sum\limits_{i=1}^4y_ic_i$. 
			We equalize the coefficients of $c_i$ and we have $\sum\limits_{j=1}^4a_{ij}x_j=\gra x_i+\grb y_i$, for every $i\in\{1,2,3,4\}$. Equivalently, 
			\begin{equation}
			\big(A-\gra I_4\big)\begin{pmatrix}
			x_1&x_2&x_3&x_4
			\end{pmatrix}^\intercal
			=\grb \begin{pmatrix}
			y_1&y_2&y_3&y_4
			\end{pmatrix}^\intercal.
			\label{coc}
			\end{equation}
			Similarly, since $s_1d_2\in S$, we have 
				\begin{equation}
				\big(A-\grd I_4\big) \begin{pmatrix}
				y_1&y_2&y_3&y_4
				\end{pmatrix}^\intercal=\grg \begin{pmatrix}
				x_1&x_2&x_3&x_4
				\end{pmatrix}^\intercal,
				\label{cocc}
				\end{equation}
				for some $\grg, \grd\in \CC$. 
		As a result, we have:
		$$	\big(A-\gra I_4\big)\big(A-\grd I_4\big)\begin{pmatrix}
		y_1&y_2&y_3&y_4
		\end{pmatrix}^\intercal=\grb\grg  \begin{pmatrix}
		y_1&y_2&y_3&y_4
		\end{pmatrix}^\intercal,$$
		meaning that $\begin{pmatrix}
			y_1&y_2&y_3&y_4
		\end{pmatrix}^\intercal$ is an eigenvector for the matrix $\big(A-\gra I_4\big)\big(A-\grd I_4\big)$. Similarly one can prove that there are $\gra'$, $\grd'\in \CC$ such that $\begin{pmatrix}
		y_1&y_2&y_3&y_4
		\end{pmatrix}^\intercal$ is an eigenvector for the matrix $\big(B-\gra' I_4\big)\big(B-\grd' I_4\big)$.
		
		Summing up, the $\CC H_{W_k}$-module $U^4_{b_1,b_2,b_3,b_4}$ admits a 2-dimensional submodule if and only if there are $\gra, \gra', \grd, \grd' \in \CC$ such as the matrices 
		$\big(A-\gra I_4\big)\big(A-\grd I_4\big)$ and $\big(B-\gra' I_4\big)\big(B-\grd' I_4\big)$ have a common eigenvector (not necessarily for the same eigenvalue). 
		Using Maple we proved that these matrices admit a common eigenvector if and only if 
		$b_m^2b_r^2+b_1b_2b_3b_4+b_l^2b_s^2=0$, with $m,r,l,s$ as in hypothesis.
			\end{proof}
\subsection{The case $\mathbf{k=3}$}
\label{s3333} The complex reflection group $W_3$ is denoted by $G_4$ in the Shephard-Todd classification and it admits the Coxeter-like presentation $\left\langle s_1, s_2\;|\;s_1^3=s_2^3=1, s_1s_2s_1=s_2s_1s_2\right\rangle$. The Hecke algebra $H_{W_3}$ is defined over the Laurent polynomial ring $R_{W_3}=\ZZ[u_1^{\pm}, u_2^{\pm}, u_3^{\pm}]$. We identify $s_i$ to their images in $H_{W_3}$ and the latter admits the presentation $$H_{W_3}=\left\langle s_1, s_2\;|\; s_1s_2s_1=s_2s_1s_2, \;(s_i-u_1)(s_i-u_2)(s_i-u_3)=0, \text{ for }  i=1,2\right\rangle.$$
We fix a specialization $\gru: R_{W_3}\rightarrow \CC$ of $R_{W_3}$, such that $u_1\mapsto a$, $u_2\mapsto b$, and $u_3\mapsto c$.

As we mentioned in Example \ref{exg}, the group $W_3$ admits 7 simple characters. More precisely, we have three 1-dimensional characters (the characters $\grf_{1,0}$,  $\grf_{1,4}$ and  $\grf_{1,8}$), three 2-dimensional characters (the characters $\grf_{2,5}$,  $\grf_{2,3}$ and  $\grf_{2,1}$) and one 3-dimensional character (the character $\grf_{3,2}$).
We now classify the decomposition matrices with respect to $\gru$ by distinguishing the following cases. Notice that this classification is up to permutation of the characters.
\begin{itemize}[leftmargin=-0.06cm]
\item[]\textbf{The set \boldmath{$\{a,b,c\}$} has cardinality 1:}
 Due to Criterion \ref{c1} and Propositions \ref{2ir} and \ref{pr3d} all characters are simple. Moreover, the 1-dimensional characters correspond to the same module, as well as the 2-dimensional ones. Hence, the decomposition matrix is the following:
\vspace*{-0.25cm}
\begin{center}
$\tiny{\begin{blockarray}[t]{cccc}
	\begin{block}{c[ccc]}
	\grf_{1,0}&&&\\
		\grf_{2,5}&&\raisebox{-7pt}{{\huge\mbox{{$I_{3}$}}}}&\\[-3ex]
		\grf_{3,2}&&&\\[0.5ex]
		\cmidrule[0.00001cm](lr){2-4}
	\grf_{1,4}&1&\cdot&\cdot\\
	\grf_{1,8}&1&\cdot&\cdot\\
	\grf_{2,3}&\cdot&1&\cdot\\
	\grf_{2,1}&\cdot&1&\cdot\\
		\end{block}
	\end{blockarray}}
	$
	\end{center}
	\vspace*{-0.2cm}
\item[]\textbf{The set \boldmath{$\{a,b,c\}$} has cardinality 2:} Up to permutation of the characters,  we may assume $a=b\not=c$.
The characters $\grf_{1,0}$ and $\grf_{1,4}$ correspond to the same module, as well as the characters $\grf_{2,5}$ and $\grf_{2,3}$. Moreover, due to Proposition \ref{2ir}, the character $\grf_{2,1}$ is simple.
 We distinguish the following cases, based on whether or not the character $\grf_{2,3}$  is simple:
 \begin{itemize}[leftmargin=0.65cm]
\item[\small{C1}.]$a^2-ac+c^2=0$: According to Proposition \ref{2ir} the character $\grf_{2,3}$ is not simple. We type:\vspace*{0.1cm}	\footnotesize{\begin{verbatim}
gap> T:=CharTable(Hecke(ComplexReflectionGroup(4),[[Mvp("a"), Mvp("b"), Mvp("c")]])).irreducibles;;
gap> t:=List(T,i->List(i,j->Value(j,["b", Mvp("a"), "c", -E(3)*Mvp("a")])));;
gap> t[5]=t[1]+t[3];
true
gap> s:=SchurElements(Hecke(ComplexReflectionGroup(4),[[Mvp("a"), Mvp("b"), Mvp("c")]]);;
gap> List(s,i->Value(i,["b", Mvp("a"), "c", -E(3)*Mvp("a")]));
gap> [0, 0, 0, 0, 0, E3, 3]
	\end{verbatim}}\normalsize
\noindent
	According to Criteria \ref{c4} and \ref{c6} the decomposition matrix is the following:
	\vspace*{-0.2cm}
	\begin{center}
	$
	\tiny{\begin{blockarray}[t]{ccccc}
	\begin{block}{c[cccc]}
	\grf_{1,0}&&&&\\
	\grf_{1,8}&&\;\;\;\;\raisebox{-10pt}{{\huge\mbox{{$I_{4}$}}}}&&\\[-3.9ex]
	\grf_{2,1}&&&&\\
	\grf_{3,2}&&&&\\
	\cmidrule[0.00001cm](lr){2-5}
	\grf_{1,4}&1&\cdot&\cdot&\cdot\\
	\grf_{2,5}&1&1&\cdot&\cdot\\
	\grf_{2,3}&1&1&\cdot&\cdot\\
	\end{block}
	\end{blockarray}}
	$
	\end{center}
	\vspace*{-0.2cm}
\item[\small{C2}.]$a^2-ac+c^2\not=0$: Due to Proposition \ref{2ir} the character $\grf_{2,5}$ is  simple and, hence,  it remains to investigate the behavior of $\grf_{3,2}$. Thanks to Proposition \ref{pr3d} it will be sufficient to examine three cases;
$a=-c$, $a^2=-c^2$, and $(a+c)(a^2+c^2)\not=0$. In the last case the character $\grf_{3,2}$ is of defect zero. For the other cases we apply criterion \ref{c6} as in Case C1. Therefore, we have the following decomposition matrices, one for each case respectively:
\vspace*{-0.1cm}
 \begin{center}
		 	$\tiny{\begin{blockarray}[t]{ccccc}
		 	\begin{block}{c[cccc]}
		 	\grf_{1,0}&&&&\\
		 	\grf_{1,8}&&\;\;\;\;\raisebox{-10pt}{{\huge\mbox{{$I_{4}$}}}}&&\\[-3.8ex]
		 		\grf_{2,5}&&&&\\
		 		\grf_{2,1}&&&&\\
		 			\cmidrule[0.00001cm](lr){2-5}
		 	\grf_{1,4}&1&\cdot&\cdot&\cdot\\
		 	\grf_{2,3}&\cdot&\cdot&1&\cdot\\
		 		\grf_{3,2}&1&\cdot&1&\cdot\\
		 	\end{block}
		 	\end{blockarray}}
		 	$\;\;\;\;\;\;\;\;
		 	$
		 	\tiny{\begin{blockarray}[t]{ccccc}
		 		\begin{block}{c[cccc]}
		 		\grf_{1,0}&&&&\\
		 		\grf_{1,8}&&\;\;\;\;\raisebox{-10pt}{{\huge\mbox{{$I_{4}$}}}}&&\\[-3.8ex]
		 		\grf_{2,5}&&&&\\
		 		\grf_{2,1}&&&&\\
		 		\cmidrule[0.00001cm](lr){2-5}
		 		\grf_{1,4}&1&\cdot&\cdot&\cdot\\
		 		\grf_{2,3}&\cdot&\cdot&1&\cdot\\
		 		\grf_{3,2}&\cdot&1&\cdot&1\\
		 		\end{block}
		 		\end{blockarray}}
		 	$\;\;\;\;\;\;\;\;
		 	$\tiny{\begin{blockarray}[t]{cccccc}
		  	\begin{block}{c[ccccc]}
		  	\grf_{1,0}&&&&&\\
		  	\grf_{1,8}&&&&&\\
		  	\grf_{2,5}&&&\raisebox{-10pt}{{\huge\mbox{{$I_{5}$}}}}&&\\[-3.8ex]
		  	\grf_{2,1}&&&&&\\
		  	\grf_{3,2}&&&&&\\
		  	\cmidrule[0.00001cm](lr){2-6}
		  	\grf_{1,4}&1&\cdot&\cdot&\cdot&\cdot\\
		  	\grf_{2,3}&\cdot&\cdot&1&\cdot&\cdot\\
		  	\end{block}
		  	\end{blockarray}}
		  $
		  \end{center}
\end{itemize}
\item[]\textbf{The set \boldmath{$\{a,b,c\}$} has cardinality 3:} We first notice that all characters correspond to distinct modules. According to Corollary \ref{c2m} at least one 2-dimensional character is simple. Based on Proposition \ref{2ir} and without loss of generality, we distinguish the following cases:
 \begin{itemize}[leftmargin=0.65cm]
\item[\small{C1}.] $a^2-ac+c^2=b^2-bc+c^2=0$: Since $a\not=b$ we may assume $a=-E(3)c$ and $b=-E(3)^2c$. 
Using Criterion \ref{c6} we obtain:
\vspace*{-0.6cm}
\begin{center}
	$\tiny{\begin{blockarray}[t]{cccccc}
	 		\begin{block}{c[ccccc]}
	 		\grf_{1,0}&&&&\\
	 		\grf_{1,4}&&\;\;\;\;\raisebox{-10pt}{{\huge\mbox{{$I_{4}$}}}}&&\\[-3.8ex]
	 		\grf_{1,8}&&&&\\
	 		\grf_{2,1}&&&&\\
	 		\cmidrule[0.00001cm](lr){2-5}
	 		\grf_{2,5}&\cdot&1&1&\cdot\\
	 		\grf_{2,3}&1&\cdot&1&\cdot\\
	\grf_{3,2}&1&1&1&\cdot\\
	 		\end{block}
	 		\end{blockarray}}
	$
	\end{center}
	\vspace*{-0.2cm}
 \item[\small{C2}.]$b^2-bc+c^2=0$ and $(a^2-ac+c^2)(a^2-ab+b^2)\not=0$: Due to Propositions \ref{2ir} and \ref{pr3d} the characters $\grf_{2,3}$, $\grf_{2,1}$ and $\grf_{3,2}$ are simple. 
   Using Criterion \ref{c6} again, we have:
   \vspace*{-0.2cm}
   \begin{center}
 $\tiny{\begin{blockarray}[t]{cccccccc}
 		\begin{block}{c[ccccccc]}
 		\grf_{1,0}&&&&&&\\
 		\grf_{1,4}&&&&&&\\
 		\grf_{1,8}&&&\;\raisebox{-10pt}{{\huge\mbox{{$I_{6}$}}}}&&&\\[-3.8ex]
 		\grf_{2,3}&&&&&&\\
 		\grf_{2,1}&&&&&&\\
 		\grf_{3,2}&&&&&&\\
 		\cmidrule[0.00001cm](lr){2-7}
 		\grf_{2,5}&\cdot&1&1&\cdot&\cdot&\cdot\\
 		\end{block}
 		\end{blockarray}}
 $
  \end{center}
  \vspace*{-0.2cm}
\item[\small{C3}.]$(b^2-bc+c^2)(a^2-ac+c^2)(a^2-ab+b^2)\not=0$: Due to Proposition \ref{2ir} all 2-dimensional characters are simple. If the character $\grf_{3,2}$ is simple, then we are in the semisimple case and the decomposition matrix is the identity matrix $I_7$. We suppose now that  $\grf_{3,2}$ is not simple.
According to Proposition \ref{pr3d} it will be sufficient to examine the case  $(a^2+bc)(b^2+ac)(c^2+ab)=0.$
Without loss of generality, we assume that $a^2+bc=0$. Using criterion \ref{c6} we obtain the following decomposition matrix:
\vspace*{-0.6cm}
\begin{center}
 $
 \tiny{\begin{blockarray}[t]{cccccccc}
 		\begin{block}{c[ccccccc]}
 		\grf_{1,0}&&&&&&\\
 		\grf_{1,4}&&&&&&\\
 		\grf_{1,8}&&&\raisebox{-10pt}{{\huge\mbox{{$I_{6}$}}}}&&&\\[-3.8ex]
 		\grf_{2,5}&&&&&&\\
 		\grf_{2,3}&&&&&&\\
 		\grf_{2,1}&&&&&&\\
 		\cmidrule[0.00001cm](lr){2-8}
 		\grf_{3,2}&1&\cdot&\cdot&1&\cdot&\cdot\\
 		\end{block}
 		\end{blockarray}}$
 	\end{center}
 	\vspace*{-0.4cm}
 		\end{itemize}
 	\end{itemize}
 	\subsection{The case $\mathbf{k=4}$} \label{4}The complex reflection group $W_4$ is denoted by $G_8$ in the Shephard-Todd classification and it admits the Coxeter-like presentation
 	$\left\langle s_1, s_2\;|\;s_1^4=s_2^4=1, s_1s_2s_1=s_2s_1s_2\right\rangle$. The Hecke algebra $H_{W_4}$ is defined over the ring $R_{W_4}=\ZZ[u_1^{\pm}, u_2^{\pm}, u_3^{\pm}, u_4^{\pm}]$. We identify again $s_i$ to their images in $H_{W_4}$ and the latter admits the following presentation:  $$H_{W_4}=\left\langle s_1, s_2\;|\; s_1s_2s_1=s_2s_1s_2, \;(s_i-u_1)(s_i-u_2)(s_i-u_3)(s_i-u_4)=0, \text{ for }  i=1,2\right\rangle.$$
 	We fix now a specialization $\gru: R_{W_4} \rightarrow \CC$ of $R_{W_4}$, such that $u_1\mapsto a$, $u_2\mapsto b$, $u_3\mapsto c$, and $u_4\mapsto d$.
 	
 The group $W_4$ admits 16 simple characters, which we simply denote by $\grf_i$, $i=1,\dots,16$ (we don't use the GAP notation). More precisely, 
 there are four characters of dimension 1 ($\grf_1,\dots, \grf_4$), six of dimension 2 ($\grf_5,\dots,\grf_{10}$), four of dimension 3 ($\grf_{11}, \dots, \grf_{14}$), and two of dimension 4 ($\grf_{15}$ and $\grf_{16}$).
We describe now the general methodology one can follow in order to find the decomposition matrix associated with every specialization.
\begin{itemize}[leftmargin=0.38cm]
	\item We first examine the characters $\grx$ with $\gru(s_{\grx})\not=0$. According to
	Criterion \ref{c4} these characters are of defect zero. 
\item We now examine the characters $\grx$ with $\gru(s_{\grx})=0$. There are two cases to be considered:
\vspace*{0.1cm}
\begin{itemize}[leftmargin=0.5cm]
	\item[$\triangleright$] The character $\grx$ is not simple, according to Propositions \ref{2ir}, \ref{pr3d}, or \ref{dim4}. Based on Criterion \ref{c6}, we use GAP in order to write $\grx$  as a linear combination of simple characters. Due to the appearance of square roots in the CHEVIE data of $H_{W_4}$, we use variables representing square roots of the parameters. Therefore, we prevent
	CHEVIE from
	extracting automatically these square roots, which may
	unavoidably be inconsistent with our expectations. We give the  example  $a^2-ab+b^2=0$:
	\vspace*{0.1cm}
	\footnotesize{\begin{verbatim}
		gap> H:=Hecke(ComplexReflectionGroup(8),[[Mvp("x")^2,Mvp("y")^2,Mvp("z")^2,Mvp("t")^2]]);;
		gap> T:=CharTable(H).irreducibles;;
		gap> t:=List(T,i->List(i,j->Value(j,["y", -E(12)^11*Mvp("x")])));;
		gap> t[5]=t[1]+t[2];
		true
		\end{verbatim}}\normalsize
	\noindent
	A different choice of the square root $y$ permutes the characters $\grf_{15}$ and $\grf_{16}$. This permutation depends on a specialization of the roots of
	the  parameters,  not  of  the  parameters themselves.
	\item[$\triangleright$]
	The character $\grx$ is simple, according to Criterion \ref{c1}, or to Propositions \ref{2ir}, \ref{pr3d}, \ref{dim4}. We check whether this character is in the same block with a character of the same dimension by using the  matrix models of the associated representation or Criterion \ref{c6}. If this does not happen, we check whether $\grx$ appears in the linear combination of a non-simple character, using the methodology we describe above for the example $a^2-ab+b^2=0$.
\end{itemize}
\end{itemize}

We notice here that the only case that needs further explanation is the case where the character $\grx$ is not simple, since one must be able to provide its linear combination of simple characters. Thanks to Propositions \ref{2ir}, \ref{pr3d} and \ref{dim4} we have necessary and sufficient condition for the simplicity of every character of dimension 2, 3 and 4. Combining them with Criterion \ref{c6} and using GAP, we are able to provide this linear combination. 

We recall at this point the case $k=3$, which we describe in Section \ref{s3333} and we notice that the non-simplicity of a character affects directly the behavior of the other characters. As a result, whenever one character is not simple, we are able to provide the exact decomposition matrix. However, for $k=4$ this does not always hold, due to the presence of 4 parameters.

The rest of this section is devoted to find the linear combination of the  non-simple characters, considering them by degree. When there is an intersection of cases, we give more details on the decomposition matrix, which sometimes are enough to provide it as whole. When the matrix is not provided as a whole, one  needs to repeat the above steps for the omitted characters and follow  the cases we present above.  
\subsubsection{The 2-dimensional characters}
\label{s2d}
Without loss of generality, we focus on $\grf_5$ and we assume that it is not a simple character.
Due to  Proposition \ref{2ir} we obtain
	 $a^2-ab+b^2=0$. According to the GAP example we give in the beginning of this section we have $\grf_5=\grf_1+\grf_2$. 
	We now examine the behavior of the rest of the characters. We first notice that
 $2\leq |\{a,b,c,d\}|\leq 4$, since $a\not=b$. We distinguish the following cases: 
\begin{itemize}[leftmargin=-0.01cm]
		\item[]\textbf{The set $\mathbf{\{a,b,c,d\}}$ has cardinality 2:} Since $a\not=b$ we have $\{a,b,c,d\}=\{a,b\}$. Up to permutation of the characters, it will be sufficient to examine the cases $\{b=c=d=-E(3)a\}$ and $\{b=-E(3)a,\; c=-E(3)a,\; d=a\}$. The decomposition matrices are respectively the following:
		\vspace{-0.3cm}
			\begin{center}
			$\fontsize{4}{4}\selectfont
			\begin{blockarray}[t]{ccccccccc}
					\begin{block}{c[cccccccc]}
						&&&&&&&\\
					&&&&\raisebox{-10pt}{{\huge\mbox{{$I_{7}$}}}}&&&\\[-3ex]
						&&&&&&&\\
						&&&&&&&\\
						\cmidrule[0.00001cm](lr){2-8}
							\grf_{3}&\cdot&1&\cdot&\cdot&\cdot&\cdot&\cdot\\
							\grf_{4}&\cdot&1&\cdot&\cdot&\cdot&\cdot&\cdot\\
							\grf_{5}&1&1&\cdot&\cdot&\cdot&\cdot&\cdot\\
							\grf_{6}&1&1&\cdot&\cdot&\cdot&\cdot&\cdot\\
							\grf_{7}&1&1&\cdot&\cdot&\cdot&\cdot&\cdot\\
							\grf_{9}&\cdot&\cdot&1&\cdot&\cdot&\cdot&\cdot\\
							\grf_{10}&\cdot&\cdot&1&\cdot&\cdot&\cdot&\cdot\\
							\grf_{13}&\cdot&\cdot&\cdot&1&\cdot&\cdot&\cdot\\
							\grf_{14}&\cdot&\cdot&\cdot&1&\cdot&\cdot&\cdot\\
					\end{block}
				\end{blockarray}
				$
				\;\;\;\;\;\;\;
$\fontsize{4}{4}\selectfont\begin{blockarray}[t]{ccccccccc}
	\begin{block}{c[cccccccc]}
	&&&&&&&\\
	&&&&\raisebox{-10pt}{{\huge\mbox{{$I_{7}$}}}}&&&\\[-3ex]
	&&&&&&&\\
	&&&&&&&\\
	\cmidrule[0.00001cm](lr){2-8}
	\grf_{3}&\cdot&1&\cdot&\cdot&\cdot&\cdot&\cdot\\
	\grf_{4}&1&\cdot&\cdot&\cdot&\cdot&\cdot&\cdot\\
	\grf_{5}&1&1&\cdot&\cdot&\cdot&\cdot&\cdot\\
	\grf_{6}&1&1&\cdot&\cdot&\cdot&\cdot&\cdot\\
	\grf_{9}&1&1&\cdot&\cdot&\cdot&\cdot&\cdot\\
	\grf_{10}&1&1&\cdot&\cdot&\cdot&\cdot&\cdot\\
	\grf_{13}&\cdot&\cdot&\cdot&\cdot&\cdot&1&\cdot\\
	\grf_{14}&\cdot&\cdot&\cdot&\cdot&1&\cdot&\cdot\\
	\grf_{15}&\cdot&\cdot&1&1&\cdot&\cdot&\cdot\\
	\end{block}
	\end{blockarray}
$
\end{center}
\vspace{-0.3cm}
The upper parts of the matrices are indexed by $\grf_1$, $\grf_2$, $\grf_8$, $\grf_{11}$, $\grf_{12}$, $\grf_{15}$, $\grf_{16}$ and
 $\grf_1$, $\grf_2$, $\grf_7$, $\grf_{8}$, $\grf_{11}$, $\grf_{12}$, $\grf_{16}$, respectively.
 \vspace*{0.2cm}
\item[]\textbf{The set $\mathbf{\{a,b,c,d\}}$ has cardinality 3}:
Since $a\not=b$ we assume $\{a,b,c,d\}=\{a,b,d\}$. Up to permutation of the characters, there are four cases to be considered; $\{b=c=-E(3)a,\; d=-E(3)^2a\}$,  $\{b=-E(3)a,\; c=d=-E(3)^2a\}$, $\{b=-E(3)a,\; c=a,\;d\not=-E(3)^2a\}$, and $\{b=-E(3)a,\; c=d\not=\pm E(3)^2a\}$. For the first two cases the decomposition matrices are respectively the following:
\vspace*{-0.6cm}
	\begin{center}
		$\fontsize{4}{4}\selectfont
		\begin{blockarray}[t]{cccccccccc}
		\begin{block}{c[ccccccccc]}
		&&&&&&&&&\\
		&&&&&\raisebox{-10pt}{{\huge\mbox{{$I_{9}$}}}}&&&&\\[-3ex]
		&&&&&&&&&\\
		&&&&&&&&&\\
		\cmidrule[0.00001cm](lr){2-10}
		\grf_{3}	&\cdot&1&\cdot&\cdot&\cdot&\cdot&\cdot&\cdot&\cdot\\
		\grf_{5}	&1&1&\cdot&\cdot&\cdot&\cdot&\cdot&\cdot&\cdot\\
		\grf_{6}	&1&1&\cdot&\cdot&\cdot&\cdot&\cdot&\cdot&\cdot\\
		\grf_{7}	&1&\cdot&1&\cdot&\cdot&\cdot&\cdot&\cdot&\cdot\\
		\grf_{10}	&\cdot&\cdot&\cdot&\cdot&1&\cdot&\cdot&\cdot&\cdot\\
		\grf_{12}	&1&1&1&\cdot&\cdot&\cdot&\cdot&\cdot&\cdot\\
		\grf_{13}	&1&1&1&\cdot&\cdot&\cdot&\cdot&\cdot&\cdot\\
		\end{block}
		\end{blockarray}
		$
		\;\;\;\;\;\;
$\fontsize{4}{4}\selectfont
\begin{blockarray}[t]{cccccccccc}
	\begin{block}{c[ccccccccc]}
	&&&&&&&&&\\
	&&&&&\raisebox{-10pt}{{\huge\mbox{{$I_{9}$}}}}&&&&\\[-3ex]
	&&&&&&&&&\\
	&&&&&&&&&\\
	\cmidrule[0.00001cm](lr){2-10}
	\grf_{4}	&\cdot&\cdot&1&\cdot&\cdot&\cdot&\cdot&\cdot&\cdot\\
	\grf_{5}	&1&1&\cdot&\cdot&\cdot&\cdot&\cdot&\cdot&\cdot\\
	\grf_{6}	&1&\cdot&1&\cdot&\cdot&\cdot&\cdot&\cdot&\cdot\\
	\grf_{7}	&1&\cdot&1&\cdot&\cdot&\cdot&\cdot&\cdot&\cdot\\
	\grf_{9}	&\cdot&\cdot&\cdot&1&\cdot&\cdot&\cdot&\cdot&\cdot\\
	\grf_{13}	&1&1&1&\cdot&\cdot&\cdot&\cdot&\cdot&\cdot\\
	\grf_{14}	&1&1&1&\cdot&\cdot&\cdot&\cdot&\cdot&\cdot\\
	\end{block}
	\end{blockarray}
$
\end{center}
\vspace{-0.2cm}
The upper parts of the matrices are indexed by  $\grf_1$, $\grf_2$, $\grf_4$, $\grf_8$, $\grf_9$, $\grf_{11}$, $\grf_{14}$, $\grf_{15}$, $\grf_{16}$ and $\grf_1$, $\grf_2$, $\grf_3$, $\grf_8$, $\grf_{10}$, $\grf_{11}$, $\grf_{12}$, $\grf_{15}$, $\grf_{16}$ respectively. 

We consider now the last two cases and we notice the presence of inequalities. As a result, we are not able to provide the decomposition matrix as a whole, since there are extra cases that need to be considered. We give these cases later in this paper, when we examine the behavour of 3 and 4-dimensional characters. The forms of these matrices are respectively the following:
\begin{center}
$\fontsize{4}{4}\selectfont
\begin{blockarray}[t]{ccccccc}
	\begin{block}{c[cccccc]}
	\grf_{1}&1&&&&&\\
	\grf_{2}&&1&&&&\\
	\grf_{4}&&&1&&&\\
	\grf_{7}&&&&1&&\\
	\grf_{8}&&&&&1&\\
	\grf_{9}&&&&&&1\\
	&&&&&&\\
	\grf_{3}&\cdot&1&\cdot&\cdot&\cdot&\cdot\\
	\grf_{5}&1&1&\cdot&\cdot&\cdot&\cdot\\
	\grf_{6}&1&1&\cdot&\cdot&\cdot&\cdot\\
	\grf_{10}&\cdot&\cdot&\cdot&\cdot&\cdot&1\\
	\end{block}
	\end{blockarray}$
	\;\;\;\;\;
$\fontsize{4}{4}\selectfont
\begin{blockarray}[t]{ccccccc}
		\begin{block}{c[cccccc]}
		\grf_{1}&1&&&&&\\
		\grf_{2}&&1&&&&\\
		\grf_{3}&&&1&&&\\
		\grf_{6}&&&&1&&\\
		\grf_{8}&&&&&1&\\
		\grf_{10}&&&&&&1\\
		&&&&&&\\
		\grf_{4}&\cdot&\cdot&1&\cdot&\cdot&\cdot\\
		\grf_{5}&1&1&\cdot&\cdot&\cdot&\cdot\\
		\grf_{6}&\cdot&\cdot&\cdot&1&\cdot&\cdot\\
		\grf_{9}&\cdot&\cdot&\cdot&\cdot&1&\cdot\\
		\end{block}
		\end{blockarray}$
		\end{center}
	\vspace*{-0.3cm}
\item[]\textbf{The cardinality of the set $\mathbf{\{a,b,c,d\}}$ is 4}: We first notice that the characters $\grf_6$, $\grf_7$, $\grf_8$, $\grf_9$ and $\grf_{10}$ correspond to distinct modules, since the associated matrix models are different. Moreover,
according to Proposition \ref{2ir} and the fact that $a,b,c,d$ are distinct, at least 3 of these characters are simple. Without loss of generality, we suppose that $\grf_6$, $\grf_7$, and $\grf_8$ are simple. We provide again the form of the decomposition matrix and not the decomposition matrix as a whole, since there are more cases to be considered, which we describe in detail in the following sections.
\vspace*{-0.3cm}
\begin{center}
	$\fontsize{4}{4}\selectfont
	\begin{blockarray}[t]{ccccccccccc}
		\begin{block}{c[cccccccccc]}
			&&&&&&&&&\\
		\grf_1&1&&&&&&&&\\
		\grf_2&&1&&&&&&&\\
		\grf_3&&&1&&&&&&\\
		\grf_4&&&&1&&&&&\\
		\grf_6&&&&&1&&&&\\
		\grf_7 &&&&&&1&&&\\
		\grf_8&&&&&&&1&&\\
	&&&&&&&&&\\
		\grf_9&\cdot&*&\cdot&*&\cdot&\cdot&\cdot&*\\
		\grf_{10}&\cdot&\cdot&*&*&\cdot&\cdot&\cdot&\cdot&*\\
		\grf_5&1&1&\cdot&\cdot&\cdot&\cdot&\cdot&\cdot&\cdot\\	
		\end{block}
		\end{blockarray}
	$
	\normalsize
\end{center}
\vspace*{-0.4cm}
where $*$ denotes a placeholder for one of two values, 0 or 1, depending on whether or not the characters $\grf_{9}$ and $\grf_{10}$ are simple.
The case where neither of these characters is simple is easy to be studied, due to Proposition \ref{2ir}. More precisely, since $a$ and $d$ are distinct and since $\grf_6$ is simple (i.e. $a^2-ac+c^2\not=0$) the characters $\grf_{9}$ and $\grf_{10}$ are not simple if and only if $d=E(3)^2a$ and $c=-a$. The decomposition matrix is the following: 
\vspace*{-0.3cm}
\begin{center}
	$\fontsize{4}{4}\selectfont
	\begin{blockarray}[t]{ccccccccccc}
		\begin{block}{c[cccccccccc]}
		&&&&&&&&&\\
		&&&&&\raisebox{-10pt}{{\huge\mbox{{$I_{9}$}}}}&&&&\\[-3ex]
		&&&&&&&&&\\
		&&&&&&&&&\\
		\cmidrule[0.00001cm](lr){2-11}
		\grf_{5}&1&1&\cdot&\cdot&\cdot&\cdot&\cdot&\cdot&\cdot\\
		\grf_{9}&\cdot&1&\cdot&1&\cdot&\cdot&\cdot&\cdot&\cdot\\
		\grf_{10}&\cdot&\cdot&1&1&\cdot&\cdot&\cdot&\cdot&\cdot\\
		\grf_{11}&\cdot&1&1&1&\cdot&\cdot&\cdot&\cdot&\cdot\\
		\grf_{13}&1&1&\cdot&1&\cdot&\cdot&\cdot&\cdot&\cdot\\
		\grf_{15}&1&1&1&1&\cdot&\cdot&\cdot&\cdot&\cdot\\
		\grf_{16}&\cdot&\cdot&\cdot&\cdot&\cdot&1&1&\cdot&\cdot\\
		\end{block}
		\end{blockarray}
	$
\end{center}
\vspace*{-0.3cm}
The upper part of the matrix is indexed by $\grf_1$, $\grf_2$, $\grf_3$, $\grf_4$, $\grf_6$, $\grf_7$, $\grf_8$, $\grf_{12}$, $\grf_{14}$.
\end{itemize}
\subsubsection{The 3-dimensional characters}
\label{s3d}
Without loss of generality, we focus on character  $\grf_{13}$ and we suppose it is not simple. According to Proposition \ref{pr3d} we have 
	 $(a^2+bd)(b^2+ad)(d^2+ab)=0$. Without loss of generality we assume that $d^2+ab=0$. 
	We type: 
	\footnotesize{\begin{verbatim}
		gap> t:=List(T,i->List(i,j->Value(j,["y", E(4)*t^2*Mvp("x")^-1)])));;
		gap> t[13]=t[4]+t[5];
		true
		\end{verbatim}}
	\normalsize
	\noindent
	At this point, we distinguish the following cases, depending on whether or not the character $\grf_5$ is simple (Proposition \ref{2ir}):
	\begin{itemize}[leftmargin=-0.003cm]
			\item[]	$\mathbf{a^2-ab+b^2=0}$: Since $b=-E(3)a$ and $d^2+ab=0$, we obtain $d=\pm E(3)^2a$. Hence, the characters $\grf_1$, $\grf_2$ and $\grf_4$ correspond to distinct modules, since $a,b$ and $d$ are distinct. Up to permutation of the characters, it will be sufficient to examine the case $d=-E(3)^2a$. The decomposition matrix is of the following form:
			\vspace*{-0.7cm}
			\begin{center}
				$\fontsize{4}{4}\selectfont
				\begin{blockarray}[t]{cccccccc}
					\begin{block}{c[ccccccc]}
					&&&&\\
					\grf_1&1&&&\\
					\grf_2&&1&&\\
					\grf_4&&&1&\\
					\grf_9&&&&1\\
					\grf_5&1&1&&\\
					\grf_7&1&&1&\\
					\grf_{13}&1&1&1&\\
					\end{block}
					\end{blockarray}
				$
				\end{center}
				\vspace*{-0.3cm}
		We now assume that there is another $3$-dimensional non-simple character. Without loss of generality, let $\grf_{14}$ be this character.
			 Due to Proposition \ref{pr3d}, it
			will be sufficient to  examine the cases $\{b=-E(3)a,\;c=d=-E(3)^2a\}$ and 
			$\{b=-E(3)a,\;c=E(3)^2a,\;d=-E(3)^2a\}$. The decomposition matrices are respectively the following:
	\vspace*{-0.3cm}\begin{center}
		 $\fontsize{4}{4}\selectfont
		  \begin{blockarray}[t]{ccccccccccc}
		 	\begin{block}{c[cccccccccc]}
		 	&&&&&&&&&\\
		 	&&&&&\raisebox{-10pt}{{\huge\mbox{{$I_{9}$}}}}&&&&\\[-3ex]
		 	&&&&&&&&&\\
		 	&&&&&&&&&\\
		 	\cmidrule[0.00001cm](lr){2-11}
		 		\grf_{4}&\cdot&\cdot&1&\cdot&\cdot&\cdot&\cdot&\cdot&\cdot\\
		 	\grf_{5}&1&1&\cdot&\cdot&\cdot&\cdot&\cdot&\cdot&\cdot\\
		 	\grf_{6}&1&\cdot&1&\cdot&\cdot&\cdot&\cdot&\cdot&\cdot\\
		 	\grf_{7}&1&\cdot&1&\cdot&\cdot&\cdot&\cdot&\cdot&\cdot\\
		 	\grf_{8}&\cdot&\cdot&\cdot&1&\cdot&\cdot&\cdot&\cdot&\cdot\\
		 	\grf_{13}&1&1&1&\cdot&\cdot&\cdot&\cdot&\cdot&\cdot\\
		 	\grf_{14}&1&1&1&\cdot&\cdot&\cdot&\cdot&\cdot&\cdot\\
		 	\end{block}
		 	\end{blockarray}
		 $
		 \;\;\;\;\;
		$
		\fontsize{4}{4}\selectfont\begin{blockarray}[t]{ccccccccccc}
			\begin{block}{c[cccccccccc]}
			&&&&&&&&&\\
			&&&&&\raisebox{-10pt}{{\huge\mbox{{$I_{9}$}}}}&&&&\\[-3ex]
		&&&&&&&&&\\
			&&&&&&&&&\\
			\cmidrule[0.00001cm](lr){2-11}
			\grf_{5}&1&1&\cdot&\cdot&\cdot&\cdot&\cdot&\cdot&\cdot\\
			\grf_{7}&1&\cdot&1&\cdot&\cdot&\cdot&\cdot&\cdot&\cdot\\
			\grf_{8} &\cdot&1&1&\cdot&\cdot&\cdot&\cdot&\cdot&\cdot\\
			\grf_{13}&1&1&\cdot&1&\cdot&\cdot&\cdot&\cdot&\cdot\\
			\grf_{14}&1&1&1&\cdot&\cdot&\cdot&\cdot&\cdot&\cdot\\
			\grf_{15}&\cdot&\cdot&\cdot&\cdot&1&1&\cdot&\cdot&\cdot\\
			\grf_{16}&1&1&1&1&\cdot&\cdot&\cdot&\cdot&\cdot\\
			\end{block}
			\end{blockarray}
		$
	\end{center}
	\vspace*{-0.3cm}
	The upper parts of the matrices are indexed by  $\grf_1$, $\grf_2$, $\grf_3$, $\grf_9$, $\grf_{10}$, $\grf_{11}$, $\grf_{12}$, $\grf_{15}$, $\grf_{16}$ and
		 $\grf_1$, $\grf_2$, $\grf_3$, $\grf_4$, $\grf_6$, $\grf_9$, $\grf_{10}$, $\grf_{11}$, $\grf_{12}$ respectively.
		
		Summing up, if a 3-dimensional character breaks up into three 1-dimensional characters and if another 3-dimensional character is not simple, then the latter  breaks up also into three 1-dimensional characters, while the rest of the 3-dimensional characters are of defect 0.\vspace*{0.1cm}
		\item[]		$\mathbf{a^2-ab+b^2\not=0}$: The character $\grf_{5}$ is simple and, since $\grf_{13}=\grf_4+\grf_5$, the decomposition matrix is of the following form:\vspace*{-0.8cm}\begin{center}
			$\tiny{\begin{blockarray}[t]{ccc}
				\begin{block}{c[cc]}
				\grf_4&1&\\
				\grf_5&&1\\
				\grf_{13}&1&1\\
				\end{block}
				\end{blockarray}}
			$
		\end{center}
		\vspace*{-0.6cm}
		\end{itemize}
\subsubsection{The 4-dimensional characters}
 We have two 4-dimensional characters, which are denoted as $\grf_{15}$ and $\grf_{16}$. We notice that $\gru(\grv_{\grf_{15}}(z_0))=-\gru(\grv_{\grf_{16}}(z_0))$. Therefore, the characters $\grf_{15}$ and $\grf_{16}$ are never in the same block (Criterion \ref{c5}) and hence, since they  have the maximal dimension, they are simple if and only if they are of defect 0.
  Without loss of generality, we examine the character $\grf_{15}$ and we suppose it is not a simple character. According to Proposition \ref{dim4} we distinguish the following cases:
\begin{itemize}[leftmargin=0.7cm]
	\item[\small{C1.}]$(a^3-bcd)(b^3-acd)(c^3-abd)(d^3-abc)=0$. Let $c^3-abd=0$. 
	We type: 
	\footnotesize{\begin{verbatim}
		gap> t:=List(T,i->List(i,j->Value(j,["y", Mvp("z")^3*Mvp("x")^-1*Mvp("t")^-1])));;
		gap> t[15]=t[3]+t[13];
		true
		\end{verbatim}}
	\normalsize
	\begin{itemize}[leftmargin=0.8cm]
		\item[\small{C1.1}]\normalsize	$(a^2+bd)(b^2+ad)(d^2+ab)=0$:
		Following Section \ref{s3d} we assume that $d^2+ab=0$. Since $c^3-abd=0$ it follows that $c^3=-d^3$. According to Section \ref{s3d}, we have:
		\vspace*{0.1cm}
		\begin{itemize}[leftmargin=-0.02cm]
			\item[] $\mathbf{a^2-ab+b^2\not=0}$: We have $b\not=E(3)^ka$, $k=1,2$. Therefore, from the equation $d^2+ab=0$ we obtain $d\not=\pm E(3)^ka$, $k=1,2$. Since $c^3=-d^3$ we obtain also $c\not=\pm E(3)^ka$, $k=1,2$.
			 According to Proposition \ref{2ir} the characters $\grf_{5}$, $\grf_6$ and $\grf_7$ are simple. The values of their central characters on $z_0$ are $-a^3b^3$, $-a^3c^3$, and $-a^3d^3$, respectively. Since $c^3\not=d^3$, the characters $\grf_6$ and $\grf_7$ are not in the same block (Criterion \ref{c4}). We distinguish now the following cases:
			 \begin{itemize}[leftmargin=0.4cm]
			\item[$\triangleright$] If $c=-d$, then the character $\grf_{10}$ is simple (Proposition \ref{2ir}). Since $c\not=d$ the characters $\grf_3$ and $\grf_4$ are distinct. Moreover, from the equation $d^2+ab=0$ we obtain $c^2+ab=0$. Hence, the character $\grf_{14}$ is not simple (see Section \ref{s3d}). Therefore, the decomposition matrix is of the following  form: \vspace*{-0.3cm}
			\begin{center}
			$\fontsize{4}{4}\selectfont
			\begin{blockarray}[t]{ccccccc}
					\begin{block}{c[cccccc]}
						&&&&&&\\
					\grf_3&1&&&&&\\
					\grf_4&&1&&&&\\
					\grf_5&&&1&&&\\
					\grf_6&&&*&*&&\\
					\grf_7&&&*&&*&\\
					\grf_{10}&&&*&*&*&*\\
					\grf_{13}&&1&1&&&\\ 
					\grf_{14}&1&&1&&&\\ 
					\grf_{15}&1&1&1&&&\\
					\end{block}
					\end{blockarray}
				$\end{center}
			\vspace*{-0.2cm}
				The $*$ denotes 
				a placeholder for one of two values, 0 or 1, whose sum in each line equals 1. Moreover, the characters $\grf_5$, $\grf_6$ and $\grf_7$ are not in the same block. 
				\item[$\triangleright$] If
			$c^2-cd+d^2=0$, then the character $\grf_{10}$ is not simple (see Proposition \ref{2ir}). Moreover, since $(a^2\pm ac+c^2)(a^2\pm ad+d^2)\not=0$  we have  $a\not =c$ and $a\not=d$. Hence, the characters $\grf_1$, $\grf_3$ and $\grf_4$ are distinct. Summing up, the decomposition matrix is of the form\vspace*{-0.2cm}
				\begin{center}
				$\fontsize{4}{4}\selectfont\begin{blockarray}[t]{cccccccc}
					\begin{block}{c[ccccccc]}
					&&&&&&\\
					\grf_3&1&&&&&\\
					\grf_4&&1&&&&\\
					\grf_1&&&1&&&&\\
					\grf_5&&&&1&&\\
					\grf_6&&&&*&*&\\
					\grf_7&&&&*&&*\\
					\grf_{10}&1&1&&&&\\
					\grf_{13}&&1&&1&&\\ 
					\grf_{15}&1&1&&1&&\\
					\end{block}
					\end{blockarray}$
				\end{center}
			\vspace*{-0.2cm}
				The $*$ denotes again
				a placeholder for one of two values, 0 or 1, whose sum in each line equals 1. The characters $\grf_5$, $\grf_6$ and $\grf_7$ cannot be  in the same block.
				\item[]  $\mathbf{a^2-ab+b^2=0}$: There are two  cases to be examined. First, we encounter the case $b=-E(3)^2a$, $c=-E(3)a$ and $d=E(3)a$, which has been examined (up to permutation of the characters) in \ref{s2d}. Secondly, we have the case $b=-E(3)^2a$, $c=-E(3)^2a$ and $d=E(3)a$:
				\vspace*{-0.7cm}
				\begin{center}$\fontsize{4}{4}\selectfont
					\begin{blockarray}[t]{ccccccccccc}
					\begin{block}{c[cccccccccc]}
					&&&&&&&&\\
					&&&&\raisebox{-10pt}{{\huge\mbox{{$I_{9}$}}}}&&&&\\[-3ex]
					&&&&&&&&\\
					&&&&&&&&\\
					\cmidrule[0.00001cm](lr){2-11}
					\grf_{3}&\cdot&1&\cdot&\cdot&\cdot&\cdot&\cdot&\cdot\\
					\grf_{5}&1&1&\cdot&\cdot&\cdot&\cdot&\cdot&\cdot\\
					\grf_{6}&1&1&\cdot&\cdot&\cdot&\cdot&\cdot&\cdot\\
					\grf_{9} &\cdot&1&\cdot&1&\cdot&\cdot&\cdot&\cdot\\
					\grf_{10} &\cdot&1&\cdot&1&\cdot&\cdot&\cdot&\cdot\\
					\grf_{12}&1&1&1&\cdot&\cdot&\cdot&\cdot&\cdot\\
					\grf_{13}&1&1&1&\cdot&\cdot&\cdot&\cdot&\cdot\\
					\grf_{15}&1&2&1&\cdot&\cdot&\cdot&\cdot&\cdot\\
					\end{block}
					\end{blockarray}
					$\end{center}
				\vspace*{-0.2cm}
				The upper part of the matrix is indexed by  $\grf_1$, $\grf_2$, $\grf_4$, $\grf_7$, $\grf_8$, $\grf_{11}$, $\grf_{14}$, $\grf_{16}$.
				\vspace*{0.1cm}
				\end{itemize}
				\end{itemize}
					\item[\small{C1.2}]\normalsize	$(a^2+bd)(b^2+ad)(d^2+ab)\not=0$:
					The character $\grf_{13}$ is simple. Therefore, we have the following form:
					\vspace*{-0.5cm}
					\begin{center}
						$\tiny{\begin{blockarray}[t]{ccc}
							\begin{block}{c[cc]}
							\grf_3&1&\\
							\grf_{13}&&1\\
							\grf_{15}&1&1\\
							\end{block}
							\end{blockarray}}
						$
					\end{center}
					\vspace*{-0.2cm}
					\end{itemize}
	\item[\small{C2.}] $(a^2b^2-abcd+c^2d^2)(a^2c^2-abcd+b^2d^2)(a^2d^2-abcd+b^2c^2)=0$. Let $ab=E(3)cd$. 
	We type: 
	\footnotesize{\begin{verbatim}
		gap> t:=List(T,i->List(i,j->Value(j,["t", E(6)*Mvp("x")*Mvp("y")*Mvp("z")^-1])));;
		gap> t[15]=t[5]+t[10];
		true
		\end{verbatim}}\normalsize
	\noindent
Since $ab\not=cd$ the characters $\grf_5$ and $\grf_{10}$ correspond to distinct modules, since the corresponding matrix models are different. We follow now Section \ref{s2d}. If $(a^2-ab+b^2)(c^2-cd+d^2)=0$, then we encounter (up to permutation of the characters) Case C1.1. If $(a^2-ab+b^2)(c^2-cd+d^2)\not=0$, then the decomposition matrix is of the following form:\vspace*{-0.3cm}\begin{center}
	$\tiny{\begin{blockarray}[t]{ccc}
			\begin{block}{c[cc]}
			\grf_5&1&\\
			\grf_{10}&&1\\
			\grf_{15}&1&1\\
			\end{block}
			\end{blockarray}}
		$
		\end{center}
		\vspace*{-0.55cm}
	\end{itemize}
	\subsection{The case $\mathbf{k=5}$} The complex reflection group $W_5$ is denoted by $G_{16}$ in the Shephard-Todd classification and it admits the Coxeter-like presentation $\left\langle s_1, s_2\;|\;s_1^5=s_2^5=1, s_1s_2s_1=s_2s_1s_2\right\rangle.$ The Hecke algebra $H_{W_5}$ is defined over the ring $R_{W_5}=\ZZ[u_1^{\pm}, u_2^{\pm}, u_3^{\pm}, u_4^{\pm}, u_5^{\pm}]$. We identify again $s_i$ to their images in $H_{W_5}$, and the latter admits the presentation $$H_{W_5}=\left\langle s_1, s_2\;|\; s_1s_2s_1=s_2s_1s_2, \;(s_i-u_1)(s_i-u_2)(s_i-u_3)(s_i-u_4)(s_i-u_5)=0, \text{ for }  i=1,2\right\rangle.$$
	We fix now a specialization $\gru: R_{W_5}\rightarrow \CC$ of $R_{W_5}$, such that $u_1\mapsto a$, $u_2\mapsto b$, $u_3\mapsto c$, $u_4\mapsto d$, and $u_5\mapsto e$.
	
	 We have 45 simple characters, which we denote here $\grf_i$, $i=1,\dots,45$.
	More precisely, we have five characters of dimension 1 ($\grf_1,\dots,\grf_5$), ten of dimension 2 ($\grf_6,\dots,\grf_{15}$), ten of dimension 3 ($\grf_{11}, \dots, \grf_{27}$), ten of dimension 4 ($\grf_{26},\dots, \grf_{35}$), five of dimension 5 ($\grf_{36},\dots,\grf_{40}$), and five of dimension 6 ($\grf_{41},\dots,\grf_{45}$).
We use the method we described in Section \ref{4} for $k=4$ and we consider again the characters by degree.
	\subsubsection{The 2-dimensional characters}
	\label{ddd2}
	Without loss of generality, we focus on  character $\grf_6$ and we suppose it is not simple. According to Proposition \ref{2ir} we have $a^2-ab+b^2=0$. We use this time variables representing 10th roots of the parameters and we type:
		\footnotesize{\begin{verbatim}
			gap> H:=Hecke(ComplexReflectionGroup(16),[[Mvp("x")^10,Mvp("y")^10,Mvp("z")^10,Mvp("t")^10,Mvp("w")^10]]);;
			gap> T:=CharTable(H).irreducibles;;
			gap> t:=List(T,i->List(i,j->Value(j,["y", E(12)^7*Mvp("x")])));;
			gap> t[6]=t[1]+t[2];
			true
			\end{verbatim}}\normalsize
		\noindent
		The decomposition matrix is of the following form:
		\vspace*{-0.3cm}
		\begin{center}
		$\tiny{
		\begin{blockarray}[t]{ccc}
			\begin{block}{c[cc]}
			\grf_1
			&1&\\
			\grf_2&&1\\
			\grf_6&1&1\\
			\end{block}
			\end{blockarray}}
			$
			\end{center}
			\vspace*{-0.4cm}
	\subsubsection{The 3-dimensional characters}
	\label{dd3}
Without loss of generality, we focus on the character $\grf_{16}$ and we assume it is not simple. According to Proposition \ref{pr3d} we have $(a^2+bc)(b^2+ac)(c^2+ab)=0$. Without loss of generality, we assume $c^2+ab=0$. 
	We type: 
	\footnotesize{\begin{verbatim}
		gap> t:=List(T,i->List(i,j->Value(j,["y", E(20)*Mvp("z")^2*Mvp("x")^-1])));;
		gap> t[16]=t[3]+t[6];
		true
		\end{verbatim}}\normalsize
	\noindent
	We distinguish now the following cases, depending on the behavior of $\grf_6$.
	\begin{itemize}[leftmargin=-0.005cm]
		\item[]	$\mathbf{a^2-ab+b^2\not=0}$: Due to Proposition \ref{2ir}, the character $\grf_{6}$ is simple. Moreover, the equation $c^2+ab=0$ together with the fact that $a^2-ab+b^2\not=0$ implies $c^2\pm ac+c^2\not=0$.  Furthermore, $c^2\pm cb+b^2\not=0$. Indeed, if $c^2\pm cb+b^2=0$ we would have $c=\pm E(3)b$ and, hence, due to the fact that $c^2=-ab$, we would obtain $a=-E(3)^2b$, which contradicts the hypothesis. Therefore, since $(c^2-ac+a^2)(c^2-cb+b^2)\not=0$, Proposition \ref{2ir} applies and, hence, the characters $\grf_7$ and $\grf_{10}$ are simple. 
		
		We examine now whether the characters $\grf_{6}$ and $\grf_{10}$ are in the same block or not. We have $\gru(\omega_{\grf_6}(z_0))=-a^3b^3$, $\gru(\omega_{\grf_7}(z_0))=-a^3c^3$ and $\gru(\omega_{\grf_{10}}(z_0))=-b^3c^3$. Notice that Criterion \ref{c4} provides a necessary but not sufficient condition for two characters being in the same block. As a result, we also use Criterion \ref{c6} in order to check if the aforementioned characters are indeed in the same block. We restrict ourselves on examining the cases
		$a=c=-b$,\quad $a=b=\pm E(4)c$, and $(c-a)(c-b)(a-b)\not=0$. The decomposition matrices have respectively the following forms:
		\vspace*{-0.2cm}
		\begin{center}
		$\fontsize{4}{4}\selectfont
		\begin{blockarray}[t]{ccccc}
				\begin{block}{c[cccc]}
				\grf_1&1&&&\\
				\grf_2&&1&&\\
					\grf_{6}&&&1&\\
						\grf_{7}&&&&1\\
				\grf_3&1&&&\\
				\grf_{10}&&&&1\\
				\grf_{16}&1&&&1\\
				\end{block}
				\end{blockarray}
			$\quad\quad
			 $\fontsize{4}{4}\selectfont
			 \begin{blockarray}[t]{ccccc}
				\begin{block}{c[cccc]}
				\grf_1&1&&&\\
				\grf_3&&1&&\\
				\grf_{6}&&&1&\\
				\grf_{7}&&&&1\\
					\grf_2&1&&&\\
				\grf_{10}&&&&1\\
				\grf_{16}&1&&1\\
				\end{block}
				\end{blockarray}
			$\quad\quad
		 $\fontsize{4}{4}\selectfont\begin{blockarray}[t]{ccccccc}
				\begin{block}{c[cccccc]}
				\grf_1&1&&&&&\\
				\grf_2&&1&&&&\\
				\grf_3&&&1&&&\\
				\grf_{6}&&&&1&&\\
				\grf_{7}&&&&&&1\\
				\grf_{10}&&&&&&1\\
				\grf_{16}&1&&&1&&\\
				\end{block}
				\end{blockarray}
			$
		\end{center}
		\vspace*{-0.2cm}
			\item[]	$\mathbf{a^2-ab+b^2=0}$: Using the same arguments as in section \ref{s3d}, the decomposition matrix is of the form:
			\vspace*{-0.6cm}
			\begin{center}
				$\fontsize{4}{4}\selectfont
				\begin{blockarray}[t]{ccccc}
				\begin{block}{c[cccc]}
				\grf_1&1&&&\\
				\grf_2&&1&&\\
				\grf_3&&&1&\\
				\grf_{10}&&&&1\\
				\grf_6&1&1&&\\
				\grf_7&1&&1&\\
				\grf_{16}&1&1&1&\\
				\end{block}
				\end{blockarray}
				$
			\end{center}
			\vspace*{-0.3cm}
			\end{itemize}
	\subsubsection{The 4-dimensional characters} 
	\label{sout}
	Without loss of generality, we assume that the character $\grf_{35}$ is not simple. We distinguish the following cases,
	based on Proposition \ref{dim4}.
\begin{itemize}[leftmargin=0.7cm]
\item[\small{C1.}]$(a^3-bcd)(b^3-acd)(c^3-abd)(d^3-abc)=0$: Without loss of generality, we assume that $d^3-abc=0$. 
We type: 
\footnotesize{\begin{verbatim}
	gap> t:=List(T,i->List(i,j->Value(j,["y", Mvp("t")^3*Mvp("x")^-1*Mvp("z")^-1])));;
	gap> t[35]=t[4]+t[16];
	true
	\end{verbatim}}
\normalsize
\noindent
We examine now the behavior of the character $\grf_{16}$.
\begin{itemize}[leftmargin=0.88cm]
	\item[\small{C1.1.}]\normalsize$(a^2+bc)(b^2+ac)(c^2+ab)\not=0$: Since $d^3-abc=0$ we obtain $(a^3+d^3)(b^3+d^3)(c^3+d^3)\not=0$.  Due to Propositions \ref{2ir} and \ref{pr3d} the characters $\grf_8$, $\grf_{11}$, $\grf_{13}$, and $\grf_{16}$ are simple. Hence, the decomposition matrix is of the following form:
	\vspace*{-0.3cm}
	\begin{center}
	$\tiny
	\begin{blockarray}[t]{cccccc}
		\begin{block}{c[ccccc]}
		\grf_4&1&&&&\\
		\grf_{8}&&1&&&\\
		\grf_{11}&&*&*&&\\
		\grf_{13}&&*&*&*&\\
		\grf_{16}&&&&&1\\
		\grf_{35}&1&&&&1\\
		\end{block}
		\end{blockarray}
	$\end{center}\vspace*{-0.2cm}
		where $*$ denotes 
		a placeholder for one of two values, 0 or 1, whose sum in each row equals 1.
	\item [\small{C1.2.}]\normalsize	$(a^2+bc)(b^2+ac)(c^2+ab)=0$:
	Let $c^2+ab=0$. Since $d^3=abc$ we obtain $c^3+d^3=0$. As we explained in Section \ref{dd3}, $\grf_{16}=\grf_3+\grf_6$. We distinguish the following cases (up to permutation of the characters), based on whether the character $\grf_6$ is simple or not.\vspace*{0.1cm}
	\begin{itemize}[leftmargin=-0.005cm]
	\item[]$\mathbf{a^2-ab+b^2=0}$: Up to permutation of the characters, there are exactly two cases to be examined; 
	$\{b=-E(3)a, c=-E(3)^2a, d=E(3)a\}$ and $\{b=-E(3)a, c=-E(3)^2a, d=a\}$. 
	The decomposition matrices are of the following forms, respectively:\vspace*{-0.2cm}
	\begin{center}
		$\fontsize{4}{4}\selectfont
		\begin{blockarray}[t]{ccccccccccc}
			\begin{block}{c[cccccccccc]}
			\grf_{1}&1&&&&&&&\\
			\grf_{2}&&1&&&&&&\\
			\grf_{3}&&&1&&&&&\\
			\grf_{4}&&&&1&&&&\\
			\grf_{8}&&&&&1&&&&\\
			\grf_{10}&&&&&&1&&&\\
			\grf_{11}&&&&&&&1&&\\
			\grf_{17}&&&&&&&&1&\\
				\grf_{22}&&&&&&&&&1\\
				&&&&&&&&&\\
			\grf_{6}&1&1&\cdot&\cdot&\cdot&\cdot&\cdot&\cdot&\cdot\\
			\grf_{7}&1&\cdot&1&\cdot&\cdot&\cdot&\cdot&\cdot&\cdot\\
			\grf_{13} &\cdot&\cdot&1&1&\cdot&\cdot&\cdot&\cdot&\cdot\\
			\grf_{16}&1&1&1&\cdot&\cdot&\cdot&\cdot&\cdot&\cdot\\
			\grf_{19}&1&\cdot&1&1&\cdot&\cdot&\cdot&\cdot&\cdot\\
			\grf_{30}&\cdot&\cdot&\cdot&\cdot&1&1&\cdot&\cdot&\cdot\\
			\grf_{35}&1&1&1&1&\cdot&\cdot&\cdot&\cdot&\cdot\\
			\end{block}
			\end{blockarray}
		$
	\;\;\;\;\;
	$\fontsize{4}{4}\selectfont
	\begin{blockarray}[t]{cccccccccc}
		\begin{block}{c[ccccccccc]}
		\grf_{1}&1&&&&&&\\
		\grf_{2}&&1&&&&&\\
		\grf_{3}&&&1&&&&\\
		\grf_{8}&&&&1&&&&\\
		\grf_{10}&&&&&1&&&\\
		\grf_{17}&&&&&&1&&\\
		\grf_{19}&&&&&&&1&\\
		\grf_{30}&&&&&&&&1\\
		&&&&&&&&\\
		\grf_{4}&1&\cdot&\cdot&\cdot&\cdot&\cdot&\cdot&\cdot\\
		\grf_{6}&1&1&\cdot&\cdot&\cdot&\cdot&\cdot&\cdot\\
		\grf_{7}&1&\cdot&1&\cdot&\cdot&\cdot&\cdot&\cdot\\
			\grf_{11}&1&1&\cdot&\cdot&\cdot&\cdot&\cdot&\cdot\\
		\grf_{13} &1&\cdot&1&\cdot&\cdot&\cdot&\cdot&\cdot\\
		\grf_{16}&1&1&1&\cdot&\cdot&\cdot&\cdot&\cdot\\
		\grf_{22}&1&1&1&\cdot&\cdot&\cdot&\cdot&\cdot\\
		\grf_{35}&2&1&1&\cdot&\cdot&\cdot&\cdot&\cdot\\
		\end{block}
		\end{blockarray}
	$\end{center}
\vspace*{-0.2cm}
	For the second decomposition matrix we also have $\grf_{18}=\grf_{24}$, $\grf_{20}=\grf_{25}$, $\grf_{26}=\grf_{34}$, $\grf_{29}=\grf_{31}$, and $\grf_{41}=\grf_{44}$.
\vspace*{0.1cm}
\item[] $\mathbf{a^2-ab+b^2\not=0}$: Since $c^3+d^3=0$ and $c^2+ab=0$, we obtain $(d^3\pm a^3)(c^3\pm a^3)\not=0$. Due to Proposition \ref{2ir}, the characters $\grf_{6}$, $\grf_7$, $\grf_8$, $\grf_{10}$ and $\grf_{11}$ are simple. Moreover, the values of their central characters on $z_0$ are $-a^3b^3$,  $-a^3c^3$,  $-a^3d^3$, $-c^3b^3$, and $-d^3b^3$, respectively.
As a result, the characters $\grf_{6}$, $\grf_{7}$ and $\grf_{8}$ are distinct, as well as the characters $\grf_{10}$ and $\grf_{11}$ (Criterion \ref{c5}). Hence:
\vspace*{0.1cm}
\begin{center}\fontsize{4}{4}\selectfont
	$\begin{blockarray}[t]{ccccccccc}
	\begin{block}{c[cccccccc]}
	\grf_1&1&&&&&&&\\
	\grf_{3}&&1&&&&&&\\
	\grf_{4}&&&1&&&&&\\
	\grf_{6}&&&&1&&&&\\
	\grf_{7}&&&&&1&&&\\
	\grf_{8}&&&&&&1&&\\
	\grf_{10}&&&&&*&*&*&\\
	\grf_{11}&&&&&*&*&&*\\	
	\grf_{16}&&1&&1&&&\\
	\grf_{35}&&1&1&1&&&\\
	&&&&&&&&\\
	\end{block}
	\end{blockarray}
	$\end{center}
\vspace*{-0.2cm}
The $*$ denotes 
a placeholder for one of two values, 0 or 1, whose sum in each line and in each column equals 1.\vspace*{0.1cm}
		\end{itemize}
	\end{itemize}
	\item[\small{C2.}] $(a^2b^2-abcd+c^2d^2)(a^2c^2-abcd+b^2d^2)(a^2d^2-abcd+b^2c^2)=0$: Let $ab=E(3)cd$. 
	We type: 
	\footnotesize{\begin{verbatim}
		gap> t:=List(T,i->List(i,j->Value(j,["t", E(6)*Mvp("x")*Mvp("y")*Mvp("z")^-1])));;
		gap> t[35]=t[6]+t[13];
		true
		\end{verbatim}}\normalsize
	\noindent
	Since $ab\not=cd$ the matrix models associated with $\grf_6$ and $\grf_{13}$ are different. 	If $(a^2-ab+b^2)(c^2-cd+d^2)=0$, we fall into Case C1.2. We assume now $(a^2-ab+b^2)(c^2-cd+d^2)\not=0$. The decomposition matrix is of the form:
	\vspace*{-0.68cm}\begin{center}
		$\tiny{\begin{blockarray}[t]{ccc}
			\begin{block}{c[cc]}
			\grf_6&1&\\
			\grf_{13}&&1\\
			\grf_{35}&1&1\\
			\end{block}
			\end{blockarray}}$
	\end{center}
	\vspace*{-0.4cm}
\end{itemize}
\subsubsection{The 5-dimensional characters}
\label{dimm5}
For $i,j\in\{36,37,\dots,40\}$ with $i\not=j$ we notice that
$\gru(\grv_{\grf_{i}}(z_0))=
-\gru(\grv_{\grf_{j}}(z_0))$. Therefore, according to Criterion \ref{c5}, different 5-dimensional characters are not in the same block. Moreover, the matrix models of the corresponding modules depend on $a,b,c,d,e$ and on the choice of a 5th root of $abcde$, which we denote by $r$. Let $U^r$ be the corresponding $\CC H_5$-module. We have the following result:
\begin{prop}The $\CC H_5$-module $U^r$ is simple if and only if 
	$(r^2+\gra r+\gra^2)(r^2+\gra\grb)\not=0,$
	for every $\gra, \grb \in \{a,b,c,d\}$ with $\gra\not=\grb$.
	\label{d5}
\end{prop}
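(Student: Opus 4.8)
The plan is to follow the strategy already used in Propositions \ref{2ir}, \ref{pr3d} and \ref{dim4}: a $5$-dimensional module is simple if and only if it admits no submodule of dimension $1$, $2$, $3$ or $4$. Writing $s_1\mapsto A$ and $s_2\mapsto B$ for the matrix model of $U^r$ (stored in the CHEVIE package of GAP3), I would translate each admissible submodule dimension into a common-eigenvector condition, and then read off, with the help of GAP and Maple, the polynomial equations in $r,a,b,c,d$ under which such a common eigenvector exists. Recall that $r$ is constrained by $r^5=abcde$, so that $e=r^5/(abcd)$ is redundant once $r$ is fixed; this is why the final condition involves only $a,b,c,d$ together with $r$.

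First I would treat the two extreme cases. The existence of a $1$-dimensional submodule is equivalent to $A$ and $B$ having a common eigenvector, exactly as in the proof of Proposition \ref{2ir}. Dually, passing to the $\CC H_{W_5}^{op}$-module $\mathrm{Hom}_\CC(U^r,\CC)$ as in Proposition \ref{pr3d}, a $4$-dimensional submodule corresponds to a $1$-dimensional quotient, hence to a common eigenvector of the transposed matrices $A^\intercal$ and $B^\intercal$. Computing the eigenvectors of $A$ (resp. $A^\intercal$) for each eigenvalue and checking when they are eigenvectors of $B$ (resp. $B^\intercal$), I expect these two cases to contribute precisely the factor $\prod_{\gra\in\{a,b,c,d\}}(r^2+\gra r+\gra^2)$.

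For the intermediate dimensions I would reuse the computation carried out in the proof of Proposition \ref{dim4}. A $2$-dimensional submodule $S=\langle d_1,d_2\rangle$ forces a coordinate vector of $S$ to be a common eigenvector of some $(A-\gra I_5)(A-\grd I_5)$ and $(B-\gra' I_5)(B-\grd' I_5)$, for suitable scalars $\gra,\grd,\gra',\grd'$; dually, a $3$-dimensional submodule yields a $2$-dimensional submodule of the dual and hence the analogous condition for $A^\intercal$ and $B^\intercal$. Eliminating the auxiliary scalars with Maple, I expect these two cases to contribute the factor $\prod_{\{\gra,\grb\}}(r^2+\gra\grb)$, the product running over the unordered pairs $\{\gra,\grb\}\subseteq\{a,b,c,d\}$ with $\gra\neq\grb$. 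Collecting the two factors yields exactly the stated condition.

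The main obstacle will be the $2$- and $3$-dimensional submodule cases: unlike the extreme cases, they introduce the four free scalars $\gra,\grd,\gra',\grd'$, so the common-eigenvector requirement becomes a system whose elimination is computationally heavy. To keep this manageable I would exploit the $S_4$-symmetry permuting $a,b,c,d$ (which permutes the relevant factors and reduces the number of genuinely distinct subcases) and substitute the relation $r^5=abcde$ to simplify the resulting polynomial identities before confirming the claimed factorization. A secondary point to verify is that the $2$-dimensional and $3$-dimensional conditions indeed coincide, so that no further factors beyond those listed survive.
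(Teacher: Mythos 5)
Your proposal follows essentially the same route as the paper's proof: reduce simplicity to the absence of $1$-, $2$-, $3$-, and $4$-dimensional submodules, handle dimensions $1$ and $4$ via common eigenvectors of $A,B$ and of $A^\intercal,B^\intercal$ as in Propositions \ref{2ir} and \ref{pr3d}, handle dimensions $2$ and $3$ via common eigenvectors of the quadratic products $(A-\grl_1 I_5)(A-\grl_2 I_5)$ and their transposed analogues as in Proposition \ref{dim4}, and delegate the elimination to GAP/Maple. The paper does exactly this, so the proposal is correct and matches the intended argument.
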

\begin{proof}
	A 5-dimensional module is simple if and only if it doesn't admit 1, 2, 3, or 4-dimensional submodules. Let $s_1\mapsto A$ and $s_2\mapsto B$ the matrix form of the  $\CC H_5$-module $U^r$.
	Following the proof of Proposition \ref{dim4},
	the existence of an 1-dimensional and 4-dimensional submodule translates into the existence of a common eigenvector for the matrices $A$ and $B$ and a common eigenvector for the matrices 
	$A^\intercal$ and $B^\intercal$. 
	As in proof of Proposition \ref{2ir}, we conclude that there are 1-dimensional and 4-dimensional submodules if and only if $r^2+\gra r+\gra^2=0$, for every $\gra \in\{a,b,c,d,e\}$.
	
	It remains to exclude the existence of 2 and 3-dimensional submodules. 
Following the proof of Proposition \ref{dim4}, the
 $\CC H_5$-module $U^r$ admits a 2-dimensional submodule if and only if there are $\grl_1, \grl_1', \grl_2, \grl_2' \in \CC$ such that the matrices 
	$\big(A-\grl_1 I_4\big)\big(A-\grl_2 I_4\big)$ and $\big(B-\grl_1' I_4\big)\big(B-\grl_2' I_4\big)$ have a common eigenvector (not necessarily for the same eigenvalue).
	The existence of a 3-dimensional submodule and, hence, a 3-dimensional quotient, translates into the existence of a 2-dimensional $\CC H_5^{op}$-submodule. As a result, there are $\grm_1, \grm_1', \grm_2, \grm_2' \in \CC$ such that the matrices 
	$\big(A^\intercal-\grm_1 I_4\big)\big(A^\intercal-\grm_2 I_4\big)$ and $\big(B^\intercal-\grm_1' I_4\big)\big(B^\intercal-\grm_2' I_4\big)$ have a common eigenvector (not necessarily for the same eigenvalue).
	Using Maple we proved that there are  common eigenvectors for the aforementioned  matrices  if and only if 
	$r^2+\gra\grb=0$, for every $\gra, \grb \in\{a,b,c,d\}$, $\gra\not=\grb$.
\end{proof}
Without loss of generality, we assume that the character $\grf_{40}$ is not simple.
We distinguish the following cases, based on Proposition \ref{d5}:
\begin{itemize}	[leftmargin=*]
	\item[\small{C1.}]$r^2+\gra r+\gra^2=0$, for some $\gra\in\{a,b,c,d,e\}$. Let $r^2+er+e^2=0$. Hence, $r=E(3)e\Rightarrow abcde=E(3)^2e^5\Rightarrow e^4=E(3)abcd$. 
	 We type: 
	 \footnotesize{\begin{verbatim}
	 	gap> t:=List(T,i->List(i,j->Value(j,["t", E(3)^2*Mvp("x")^-1*Mvp("y")^-1*Mvp("z")^-1*Mvp("w")^4])));;
	 	gap> t[40]=t[5]+t[35];
	 	true
	 	\end{verbatim}}\normalsize
\noindent
	  In order to have $\grf_{40}=\grf_5+\grf_{35}$ and not 
	$\grf_{40}=\grf_5+\grf_{30}$, we must choose  $t=E(3)x^{-1}y^{-1}z^{-1}w^4$ instead of $t=-E(3)x^{-1}y^{-1}z^{-1}w^4$. 	We follow now Section \ref{sout}. Since we work with 10th roots of the parameters, one may notice that there are less choices for the parameter $e$.
	\vspace*{0.1cm}
	\begin{itemize}[leftmargin=0.9cm]
			\item[\small{C1.1.}]\small{$(a^3-bcd)(b^3-acd)(c^3-abd)(d^3-abc)(a^2b^2-r^5+c^2d^2)(a^2c^2-r^5+b^2d^2)(a^2d^2-r^5+b^2c^2)\not=0$}: \normalsize \vspace*{-0.3cm}
			\begin{center}
				$
			\fontsize{4}{4}\selectfont
				\begin{blockarray}[t]{cccc}
					\begin{block}{c[ccc]}
					\grf_{5}&1&&\\
					\grf_{30}&&1&\\
					\grf_{35}&&&1\\
					\grf_{40}&1&&1\\
					\end{block}
					\end{blockarray}
				$
			\end{center}
			\vspace*{-0.2cm}
		\item[\small{C1.2.}]	$e^4=E(3)d^4$ and $(a^2+bc)(b^2+ac)(c^2+ab)\not=0$: Since $e\not=d$, the characters $\grf_4$ and $\grf_5$ are distinct. Following C1.1 of Section \ref{sout},  the decomposition matrix is of the form
		\vspace*{-0.2cm}
		\begin{center}
			$\fontsize{4}{4}\selectfont
			\begin{blockarray}[t]{ccccccc}
				\begin{block}{c[cccccc]}
				\grf_4&1&&&&&\\
				\grf_5&&1&&&&\\
				\grf_{8}&&&1&&&\\
				\grf_{11}&&&*&*&&\\
				\grf_{13}&&&*&*&*&\\
				\grf_{16}&&&&&&1\\
				\grf_{35}&1&&&&&1\\
				\grf_{40}&1&1&&&&1\\
				\end{block}
				\end{blockarray}
			$\end{center}
		\vspace*{-0.3cm}
		The $*$ denotes 
		a placeholder for one of two values, 0 or 1, whose sum in each line is 1.
		\item [\small{C1.3.}]	$b=-E(3)a$, $c=-E(3)^2a$, $d=E(3)a$, $e=a$:\vspace*{0.1cm} \begin{center}
			$\fontsize{4}{4}\selectfont
		\begin{blockarray}{ccccccccccccc}
				\begin{block}{c[cccccccccccc]}
				&&&&&\raisebox{-10pt}{{\huge\mbox{{$I_{25}$}}}}&&&&&&\\[-3ex]
				&&&&&&&&&&&\\
				&&&&&&&&&&&\\
				\cmidrule[0.00001cm](lr){2-13}
				\grf_{5}&1&\cdot&\cdot&\cdot&\cdot&\cdot&\cdot&\cdot&\cdot&\cdot&\cdot\\
				\grf_{6}&1&1&\cdot&\cdot&\cdot&\cdot&\cdot&\cdot&\cdot&\cdot&\cdot\\
				\grf_{7}&1&\cdot&1&\cdot&\cdot&\cdot&\cdot&\cdot&\cdot&\cdot&\cdot\\
				\grf_{12}&1&1&\cdot&\cdot&\cdot&\cdot&\cdot&\cdot&\cdot&\cdot&\cdot\\
				\grf_{13} &\cdot&\cdot&1&1&\cdot&\cdot&\cdot&\cdot&\cdot&\cdot&\cdot\\
				\grf_{14}&1&\cdot&1&\cdot&\cdot&\cdot&\cdot&\cdot&\cdot&\cdot&\cdot\\
				\grf_{16}&1&1&1&\cdot&\cdot&\cdot&\cdot&\cdot&\cdot&\cdot&\cdot\\
				\grf_{19}&1&\cdot&1&1&\cdot&\cdot&\cdot&\cdot&\cdot&\cdot&\cdot\\
				\grf_{23}&1&1&1&\cdot&\cdot&\cdot&\cdot&\cdot&\cdot&\cdot&\cdot\\
				\grf_{24}&\cdot&\cdot&\cdot&\cdot&\cdot&\cdot&\cdot&1&\cdot&\cdot&\cdot\\
				\grf_{25}&1&\cdot&1&1&\cdot&\cdot&\cdot&\cdot&\cdot&\cdot&\cdot\\
				\grf_{26}&\cdot&\cdot&\cdot&\cdot&\cdot&1&1&\cdot&\cdot&\cdot&\cdot\\
				\grf_{30}&\cdot&\cdot&\cdot&\cdot&1&1&\cdot&\cdot&\cdot&\cdot&\cdot\\
				\grf_{31}&1&1&1&1&\cdot&\cdot&\cdot&\cdot&\cdot&\cdot&\cdot\\
				\grf_{35}&1&1&1&1&\cdot&\cdot&\cdot&\cdot&\cdot&\cdot&\cdot\\
				\grf_{40}&2&1&1&1&\cdot&\cdot&\cdot&\cdot&\cdot&\cdot&\cdot\\
				\grf_{42}&\cdot&\cdot&\cdot&\cdot&\cdot&\cdot&\cdot&\cdot&1&1&\cdot\\
				\grf_{43}&2&1&2&1&\cdot&\cdot&\cdot&\cdot&\cdot&\cdot&\cdot\\
				\grf_{45}&\cdot&\cdot&\cdot&\cdot&\cdot&\cdot&\cdot&\cdot&\cdot&\cdot&1\\
				\end{block}
				\end{blockarray}
			$\end{center}
		\vspace*{-0.2cm}
		The upper part of the matrix is labeled by $\grf_1$, $\grf_2$, $\grf_3$, $\grf_4$, $\grf_8$, $\grf_{10}$, $\grf_{15}$,   $\grf_{17}$,  
		$\grf_{18}$,   $\grf_{22}$, and   $\grf_{41}$ together with the  characters  $\grf_9$, $\grf_{11}$, $\grf_{20}$,   $\grf_{21}$,  
		$\grf_{27}$,   $\grf_{28}$, $\grf_{29}$, $\grf_{32}$, $\grf_{33}$, $\grf_{34}$, $\grf_{36}$, $\grf_{37}$, $\grf_{38}$, $\grf_{39}$, and  $\grf_{44}$, which are of defect 0.	
			\item[\small{C1.4.}]$b=-E(3)a$, $c=-E(3)^2a$, $d=a$, $e=E(3)a$: 
			\vspace*{-0.2cm}\begin{center}
				$\fontsize{4}{4}\selectfont
				\begin{blockarray}[t]{cccccccccccc}
				\begin{block}{c[ccccccccccc]}
				&&&&&\raisebox{-10pt}{{\huge\mbox{{$I_{24}$}}}}&&&&&\\[-3ex]
				&&&&&&&&&&\\
				&&&&&&&&&&\\
				\cmidrule[0.00001cm](lr){2-12}
				\grf_{4}&1&\cdot&\cdot&\cdot&\cdot&\cdot&\cdot&\cdot&\cdot&\cdot\\
				\grf_{6}&1&1&\cdot&\cdot&\cdot&\cdot&\cdot&\cdot&\cdot&\cdot\\
				\grf_{7}&1&\cdot&1&\cdot&\cdot&\cdot&\cdot&\cdot&\cdot&\cdot\\
				\grf_{11}&1&1&\cdot&\cdot&\cdot&\cdot&\cdot&\cdot&\cdot&\cdot\\
				\grf_{13}&1&\cdot&1&\cdot&\cdot&\cdot&\cdot&\cdot&\cdot&\cdot\\
				\grf_{14}&1&\cdot&\cdot&1&\cdot&\cdot&\cdot&\cdot&\cdot&\cdot\\
				\grf_{15}&\cdot&\cdot&\cdot&\cdot&1&\cdot&\cdot&\cdot&\cdot&\cdot\\
				\grf_{16}&1&1&1&\cdot&\cdot&\cdot&\cdot&\cdot&\cdot&\cdot\\
				\grf_{20}&1&\cdot&1&1&\cdot&\cdot&\cdot&\cdot&\cdot&\cdot\\
				\grf_{22}&1&1&1&\cdot&\cdot&\cdot&\cdot&\cdot&\cdot&\cdot\\
				\grf_{24}&\cdot&\cdot&\cdot&\cdot&\cdot&\cdot&\cdot&1&\cdot&\cdot\\
				\grf_{25}&1&\cdot&1&1&\cdot&\cdot&\cdot&\cdot&\cdot&\cdot\\
				\grf_{26}&1&1&1&1&\cdot&\cdot&\cdot&\cdot&\cdot&\cdot\\
				\grf_{29}&\cdot&\cdot&\cdot&\cdot&1&1&\cdot&\cdot&\cdot&\cdot\\
				\grf_{31}&\cdot&\cdot&\cdot&\cdot&1&1&\cdot&\cdot&\cdot&\cdot\\
				\grf_{34}&1&1&1&1&\cdot&\cdot&\cdot&\cdot&\cdot&\cdot\\
				\grf_{35}&2&1&1&\cdot&\cdot&\cdot&\cdot&\cdot&\cdot&\cdot\\
				\grf_{40}&2&1&1&1&\cdot&\cdot&\cdot&\cdot&\cdot&\cdot\\
				\grf_{42}&\cdot&\cdot&\cdot&\cdot&\cdot&\cdot&1&\cdot&1&\cdot\\
				\grf_{43}&2&1&2&1&\cdot&\cdot&\cdot&\cdot&\cdot&\cdot\\
				\grf_{44}&\cdot&\cdot&\cdot&\cdot&\cdot&\cdot&\cdot&\cdot&\cdot&1\\
				\end{block}
				\end{blockarray}
				$\end{center}
			\vspace*{-0.2cm}
			The upper part of the matrix is labeled by $\grf_1$, $\grf_2$, $\grf_3$, $\grf_5$, $\grf_9$, $\grf_{10}$, $\grf_{17}$,  
			$\grf_{18}$,   $\grf_{23}$, and   $\grf_{41}$ together with the characters  $\grf_8$, $\grf_{12}$, $\grf_{19}$,   $\grf_{21}$,  
			$\grf_{27}$,   $\grf_{28}$, $\grf_{30}$, $\grf_{32}$, $\grf_{33}$,  $\grf_{36}$, $\grf_{37}$, $\grf_{38}$, $\grf_{39}$, and  $\grf_{45}$, which are of defect 0.
			\item[\small{C1.5.}]$ab=E(3)cd$ and $(a^2-ab+b^2)(c^2-cd+d^2)\not=0$:
			\vspace*{-0.3cm}
			\begin{center}
			$\fontsize{4}{4}\selectfont\begin{blockarray}[t]{cccc}
					\begin{block}{c[ccc]}
					\grf_5&1&&\\
					\grf_6&&1&\\
					\grf_{13}&&&1\\
					\grf_{35}&&1&1\\
					\grf_{40}&1&1&1\\
					\end{block}
					\end{blockarray}
				$
				\end{center}
			\vspace*{-0.2cm}

			\item[\small{C1.6.}] $e^4=E(3)d^4$, $c^2+ab=0$, and $a^2-ab+b^2\not=0$:
			For $c\not=e$ we have the following form:
			\vspace*{-0.3cm}
			\begin{center}
			$\fontsize{4}{4}\selectfont\begin{blockarray}[t]{ccccccccc}
				\begin{block}{c[cccccccc]}
				&&&&&&&&\\
				\grf_{3}&1&&&&&&&\\
				\grf_{4}&&1&&&&&&\\
				\grf_5&&&1&&&&&\\
				\grf_{6}&&&&1&&&&\\
				\grf_{7}&&&&&1&&&\\
				\grf_{8}&&&&&&1&&\\
				\grf_{10}&&&&&*&*&*&\\
				\grf_{11}&&&&&*&*&&*\\	
				\grf_{16}&1&&&1&&&&\\
				\grf_{35}&1&1&&1&&&&\\
				\grf_{40}&1&1&1&1&&&&\\
				\end{block}
				\end{blockarray}
			$\end{center}
		\vspace*{-0.2cm}
			The $*$ denotes 
			a placeholder for one of two values, 0 or 1. The sum of these values in each line and in each column equals 1.
			For $e=c$ we have a different approach. Notice that $c^3=-d^3$. Since $e=c$ and $e^4=E(3)d^4$ we obtain $c^4=E(3)d^3d$ and, hence, $d=-E(3)^2c$. Due to Proposition \ref{2ir}, the character $\grf_{13}=\grf_{15}$ is not simple. Due to the same proposition, the character $\grf_{14}$ is simple. As a result, the decomposition matrix is of the form:
			\vspace*{-0.6cm}
			\begin{center}
				$\fontsize{4}{4}\selectfont
			\begin{blockarray}[t]{ccccccccccc}
				\begin{block}{c[cccccccccc]}
				\grf_{3}&1&&&&&&&\\
				\grf_{4}&&1&&&&&&\\
				\grf_{6}&&&1&&&&&\\
				\grf_{7}&&&&1&&&&\\
				\grf_{8}&&&&&1&&&\\
				\grf_{10}&&&&*&*&*&&\\
				\grf_{11}&&&&*&*&&*&\\
				\grf_{14}&&&&*&*&*&*&*\\
				\grf_{5}&1&&&&&&&\\
				\grf_{9}&&&&1&&&&\\
				\grf_{13}&1&1&&&&&&\\
				\grf_{15}&1&1&&&&&&\\
				\grf_{16}&1&&1&&&&&\\
				\grf_{18}&1&&1&&&&&\\
				\grf_{33}&1&1&1&&&&&\\
				\grf_{35}&1&1&1&&&&&\\
				\grf_{40}&2&1&1&&&&&\\
				\end{block}
				\end{blockarray}
			$\end{center}
		\vspace*{-0.2cm}
			where $*$ denotes again
			a placeholder for one of two values, 0 or 1. The sum of these values in each line equals 1.
			We also have $\grf_{12}=\grf_{10}$, $\grf_{19}=\grf_{21}$, $\grf_{22}=\grf_{24}$, and $\grf_{28}=\grf_{30}$. The characters $\grf_{10}$ and $\grf_{11}$ are not in the same block.

			\item [\small{C1.7.}]$b=-E(3)a$, $c=-E(3)^2a$, $d=E(3)a$, $e=-a$:
			\vspace*{-0.2cm}
			\begin{center}
			$\fontsize{4}{4}\selectfont
			\begin{blockarray}[t]{cccccccccccccccc}
				\begin{block}{c[ccccccccccccccc]}
				&&&&&&&&\raisebox{-10pt}{{\huge\mbox{{$I_{29}$}}}}&&&&&&\\[-3ex]
				&&&&&&&&&&&&&&\\
				&&&&&&&&&&&&&&\\
				\cmidrule[0.00001cm](lr){2-16}
				\grf_{6}&1&1&\cdot&\cdot&\cdot&\cdot&\cdot&\cdot&\cdot&\cdot&\cdot&\cdot&\cdot&\cdot\\
				\grf_{7}&1&\cdot&1&\cdot&\cdot&\cdot&\cdot&\cdot&\cdot&\cdot&\cdot&\cdot&\cdot&\cdot\\
				\grf_{13} &\cdot&\cdot&1&1&\cdot&\cdot&\cdot&\cdot&\cdot&\cdot&\cdot&\cdot&\cdot&\cdot\\
				\grf_{15}&\cdot&\cdot&\cdot&1&1&\cdot&\cdot&\cdot&\cdot&\cdot&\cdot&\cdot&\cdot&\cdot\\
				\grf_{16}&1&1&1&\cdot&\cdot&\cdot&\cdot&\cdot&\cdot&\cdot&\cdot&\cdot&\cdot&\cdot\\
				\grf_{19}&1&\cdot&1&1&\cdot&\cdot&\cdot&\cdot&\cdot&\cdot&\cdot&\cdot&\cdot&\cdot\\
				\grf_{25}&\cdot&\cdot&1&1&1&\cdot&\cdot&\cdot&\cdot&\cdot&\cdot&\cdot&\cdot&\cdot\\
				\grf_{27}&1&\cdot&1&1&1&\cdot&\cdot&\cdot&\cdot&\cdot&\cdot&\cdot&\cdot&\cdot\\
				\grf_{28}&\cdot&\cdot&\cdot&\cdot&\cdot&\cdot&1&\cdot&1&\cdot&\cdot&\cdot&\cdot&\cdot\\
				\grf_{30}&\cdot&\cdot&\cdot&\cdot&\cdot&1&\cdot&1&\cdot&\cdot&\cdot&\cdot&\cdot&\cdot\\
				\grf_{32}&\cdot&\cdot&\cdot&\cdot&\cdot&1&\cdot&\cdot&\cdot&1&\cdot&\cdot&\cdot&\cdot\\
				\grf_{35}&1&1&1&1&\cdot&\cdot&\cdot&\cdot&\cdot&\cdot&\cdot&\cdot&\cdot&\cdot\\
				\grf_{40}&1&1&1&1&1&\cdot&\cdot&\cdot&\cdot&\cdot&\cdot&\cdot&\cdot&\cdot\\
				\grf_{41}&\cdot&\cdot&\cdot&\cdot&\cdot&\cdot&\cdot&\cdot&\cdot&\cdot&1&1&\cdot&\cdot\\
				\grf_{43}&\cdot&\cdot&\cdot&\cdot&\cdot&1&\cdot&1&\cdot&1&\cdot&\cdot&\cdot&\cdot\\
				\grf_{44}&\cdot&\cdot&\cdot&\cdot&\cdot&\cdot&\cdot&\cdot&\cdot&\cdot&\cdot&\cdot&1&1\\
				\end{block}
				\end{blockarray}
			$\end{center}
		\vspace*{-0.3cm}
			The upper part of the matrix is labeled by $\grf_1$, $\grf_2$, $\grf_3$, $\grf_4$, $\grf_5$, $\grf_8$, $\grf_9$, $\grf_{10}$, $\grf_{11}$,   $\grf_{14}$,  $\grf_{17}$,
			$\grf_{20}$,   $\grf_{21}$, and   $\grf_{22}$ together with  the characters  $\grf_{12}$, $\grf_{18}$, $\grf_{23}$,   $\grf_{24}$,  
			$\grf_{26}$,  $\grf_{29}$, $\grf_{31}$, $\grf_{33}$, $\grf_{34}$, $\grf_{36}$, $\grf_{37}$, $\grf_{38}$, $\grf_{39}$, $\grf_{42}$, and  $\grf_{45}$, which are of defect 0.
			\item[\small{C1.8.}]$b=-E(3)a$, $c=-E(3)^2a$, $d=a$, $e=-E(3)a$:
			\vspace*{-0.2cm}
			\begin{center} 
				$\fontsize{4}{4}\selectfont
				\begin{blockarray}[t]{cccccccccccccc}
				\begin{block}{c[ccccccccccccc]}
				&&&&&&&&&\\
				&&&&&&&\raisebox{-10pt}{{\huge\mbox{{$I_{19}$}}}}&&&&&&\\[-3ex]
				&&&&&&&&&&\\
				&&&&&&&&&&\\
				\cmidrule[0.00001cm](lr){2-14}
				\grf_{4}&1&\cdot&\cdot&\cdot&\cdot&\cdot&\cdot&\cdot&\cdot&\cdot&\cdot&\cdot&\cdot\\
				\grf_{5}&\cdot&1&\cdot&\cdot&\cdot&\cdot&\cdot&\cdot&\cdot&\cdot&\cdot&\cdot&\cdot\\
				\grf_{6}&1&1&\cdot&\cdot&\cdot&\cdot&\cdot&\cdot&\cdot&\cdot&\cdot&\cdot&\cdot\\
				\grf_{7}&1&\cdot&1&\cdot&\cdot&\cdot&\cdot&\cdot&\cdot&\cdot&\cdot&\cdot&\cdot\\
				\grf_{9}&1&1&\cdot&\cdot&\cdot&\cdot&\cdot&\cdot&\cdot&\cdot&\cdot&\cdot&\cdot\\
				\grf_{11}&1&1&\cdot&\cdot&\cdot&\cdot&\cdot&\cdot&\cdot&\cdot&\cdot&\cdot&\cdot\\
				\grf_{13}&1&\cdot&1&\cdot&\cdot&\cdot&\cdot&\cdot&\cdot&\cdot&\cdot&\cdot&\cdot\\
				\grf_{14}&\cdot&\cdot&\cdot&\cdot&1&\cdot&\cdot&\cdot&\cdot&\cdot&\cdot&\cdot&\cdot\\
				\grf_{15}&1&1&\cdot&\cdot&\cdot&\cdot&\cdot&\cdot&\cdot&\cdot&\cdot&\cdot&\cdot\\
				\grf_{16}&1&1&1&\cdot&\cdot&\cdot&\cdot&\cdot&\cdot&\cdot&\cdot&\cdot&\cdot\\
				\grf_{20}&1&1&1&\cdot&\cdot&\cdot&\cdot&\cdot&\cdot&\cdot&\cdot&\cdot&\cdot\\
				\grf_{21}&\cdot&\cdot&\cdot&\cdot&\cdot&\cdot&1&\cdot&\cdot&\cdot&\cdot&\cdot&\cdot\\
				\grf_{22}&1&1&1&\cdot&\cdot&\cdot&\cdot&\cdot&\cdot&\cdot&\cdot&\cdot&\cdot\\
				\grf_{24}&\cdot&\cdot&\cdot&\cdot&\cdot&\cdot&\cdot&1&\cdot&\cdot&\cdot&\cdot&\cdot\\
				\grf_{25}&1&1&1&\cdot&\cdot&\cdot&\cdot&\cdot&\cdot&\cdot&\cdot&\cdot&\cdot\\
				\grf_{27}&2&1&1&\cdot&\cdot&\cdot&\cdot&\cdot&\cdot&\cdot&\cdot&\cdot&\cdot\\
				\grf_{29}&\cdot&\cdot&\cdot&\cdot&1&1&\cdot&\cdot&\cdot&\cdot&\cdot&\cdot&\cdot\\
				\grf_{31}&\cdot&\cdot&\cdot&\cdot&\cdot&\cdot&\cdot&\cdot&\cdot&\cdot&1&\cdot&\cdot\\
				\grf_{32}&\cdot&\cdot&\cdot&\cdot&\cdot&\cdot&\cdot&\cdot&\cdot&\cdot&\cdot&1&\cdot\\
				\grf_{33}&\cdot&\cdot&\cdot&1&\cdot&1&\cdot&\cdot&\cdot&\cdot&\cdot&\cdot&\cdot\\
				\grf_{34}&\cdot&\cdot&\cdot&\cdot&\cdot&\cdot&\cdot&\cdot&\cdot&1&\cdot&\cdot&\cdot\\
				\grf_{35}&2&1&1&\cdot&\cdot&\cdot&\cdot&\cdot&\cdot&\cdot&\cdot&\cdot&\cdot\\
				\grf_{40}&2&2&1&\cdot&\cdot&\cdot&\cdot&\cdot&\cdot&\cdot&\cdot&\cdot&\cdot\\
				\grf_{41}&\cdot&\cdot&\cdot&\cdot&\cdot&\cdot&\cdot&1&1&\cdot&\cdot&\cdot&\cdot\\
				\grf_{43}&\cdot&\cdot&\cdot&\cdot&1&\cdot&\cdot&\cdot&\cdot&\cdot&\cdot&1&\cdot\\
				\grf_{44}&\cdot&\cdot&\cdot&\cdot&\cdot&\cdot&\cdot&1&1&\cdot&\cdot&\cdot&\cdot\\
				\grf_{45}&\cdot&\cdot&\cdot&\cdot&\cdot&\cdot&\cdot&\cdot&\cdot&\cdot&\cdot&\cdot&1\\
				\end{block}
				\end{blockarray}
				$\end{center}
			\vspace*{-0.2cm}
			The upper part of the matrix is labeled by  $\grf_1$, $\grf_2$, $\grf_3$, $\grf_8$,  $\grf_{10}$, $\grf_{12}$, $\grf_{17}$,  
			$\grf_{18}$,  $\grf_{19}$, $\grf_{26}$,  $\grf_{29}$,  $\grf_{30}$, and   $\grf_{42}$ together with the defect 0 characters  $\grf_{23}$, $\grf_{28}$,  $\grf_{36}$, $\grf_{37}$, $\grf_{38}$, and $\grf_{39}$.
	\end{itemize}
	\item[\small{C2.}]$r^2+\gra\grb=0$, for some $\gra,\grb\in\{a,b,c,d,e\}$ distinct: Let $r^2=-ab$. We type:
	\footnotesize{\begin{verbatim}
		gap> t:=List(T,i->List(i,j->Value(j,["w", E(8)*Mvp("x")^(3/2)*Mvp("y")^(3/2)*Mvp("z")^-1*Mvp("t")^-1])));;
		gap> t[40]=t[6]+t[25];
		true
		\end{verbatim}}\normalsize
	\noindent
	We follow now Sections \ref{ddd2} and \ref{dd3} and, up to permutation of the characters, we have:
	\begin{itemize}[leftmargin=0.9cm]
		\item[\small{C2.1.}]$(a^2-ab+b^2)\not=0$ and $(c^2+de)(d^2+ce)(e^2+dc)=0$: This case coincide with C1.3-C1.7.

			 \item[\small{C2.2.}]$(a^2-ab+b^2)\not=0$ and $(c^2+de)(d^2+ce)(e^2+dc)=0$:
			 Without loss of generality, we assume that $c^2+de=0$. Following Section \ref{dd3}, we have that $\grf_{25}=\grf_3+\grf_{15}$. We distinguish the following cases, depending on the behavior of the character $\grf_{15}$.
			 \begin{itemize}[leftmargin=-0.08cm]
			 	\item[] $\mathbf{d^2-de+e^2\not=0}$: Due to Proposition \ref{2ir} the character $\grf_{15}$ is simple.
			 	The characters $\grf_{6}$ and $\grf_{15}$ correspond to the same module, if and only if the matrix models are the same. One can find these matrix models in GAP and notice that they are the same if and only if $ab=de$ and $a+b=d+e$. We multiply the second equation with $a$ and we have $a^2+ab=ad+ae\Rightarrow a^2+de=ad+ae\Rightarrow a\in\{d,e\}$. Since $a+b=d+e$ we obtain $\{a=d,b=e\}$ or $\{a=e,b=d\}$. Therefore, it will be sufficient to consider the cases
			  $\{a,b\}\not=\{d,e\}$ and $a,b\in\{d,e\}$. The decomposition matrices have respectively the following forms:
			   \vspace*{-0.5cm}
			 	\begin{center}
			 	$\tiny{
			 	\begin{blockarray}[t]{cccc}
			 		\begin{block}{c[ccc]}
			 		\grf_{3}&1&&\\
			 		\grf_{6}&&1&\\
			 		\grf_{15}&&&1\\
			 		\grf_{25}&1&&1\\
			 		\grf_{40}&1&1&1\\
			 		\end{block}
			 		\end{blockarray}}
			 	$\;\;\;\;\;\;\;\;\;
			 	$\tiny{
			 	\begin{blockarray}[t]{cccc}
			 		\begin{block}{c[ccc]}
			 		\grf_{3}&1&\\
			 		\grf_{6}&&1\\
			 		\grf_{15}&&1\\
			 		\grf_{25}&1&1\\
			 		\grf_{40}&1&2\\
			 		\end{block}
			 		\end{blockarray}}
			 	$\end{center}\vspace*{-0.2cm}
			One could examine here the case $a=b=d=e$. Since $c^2+ab=0$ we have $c^2+a^2=0$. The decomposition matrix is the following:\vspace*{-0.3cm}
			 \begin{center}
			 $\fontsize{4}{4}\selectfont
			 \begin{blockarray}[t]{ccccccccccc}
			 	\begin{block}{c[cccccccccc]}
			 	&&&&&&&\\
			 	&&&&\raisebox{-10pt}{{\huge\mbox{{$I_{14}$}}}}&&&\\[-3ex]
			 	&&&&&&&\\
			 	&&&&&&&\\
			 	\cmidrule[0.00001cm](lr){2-9}
			 	\grf_{2}&1&\cdot&\cdot&\cdot&\cdot&\cdot&\cdot&\cdot\\
			 	\grf_{4}&1&\cdot&\cdot&\cdot&\cdot&\cdot&\cdot&\cdot\\
			 	\grf_{5}&1&\cdot&\cdot&\cdot&\cdot&\cdot&\cdot&\cdot\\
			 	\grf_{8}&\cdot&\cdot&1&\cdot&\cdot&\cdot&\cdot&\cdot\\
			 	\grf_{9}&\cdot&\cdot&1&\cdot&\cdot&\cdot&\cdot&\cdot\\
			 	\grf_{10}&\cdot&\cdot&\cdot&1&\cdot&\cdot&\cdot&\cdot\\
			 	\grf_{11}&\cdot&\cdot&1&\cdot&\cdot&\cdot&\cdot&\cdot\\
			 	\grf_{12}&\cdot&\cdot&1&\cdot&\cdot&\cdot&\cdot&\cdot\\
			 	\grf_{13}&\cdot&\cdot&\cdot&1&\cdot&\cdot&\cdot&\cdot\\
			 	\grf_{14}&\cdot&\cdot&\cdot&1&\cdot&\cdot&\cdot&\cdot\\
			 	\grf_{15}&\cdot&\cdot&1&\cdot&\cdot&\cdot&\cdot&\cdot\\
			 	\grf_{16}&\cdot&1&1&\cdot&\cdot&\cdot&\cdot&\cdot\\
			 		\grf_{17}&\cdot&\cdot&\cdot&\cdot&1&\cdot&\cdot&\cdot\\
			 		\grf_{18}&\cdot&\cdot&\cdot&\cdot&1&\cdot&\cdot&\cdot\\
			 	\grf_{19}&\cdot&1&1&\cdot&\cdot&\cdot&\cdot&\cdot\\
			 	\grf_{20}&\cdot&1&1&\cdot&\cdot&\cdot&\cdot&\cdot\\
			 	\grf_{21}&\cdot&\cdot&\cdot&\cdot&1&\cdot&\cdot&\cdot\\
			 	\grf_{22}&\cdot&1&1&\cdot&\cdot&\cdot&\cdot&\cdot\\
			 	\grf_{23}&\cdot&1&1&\cdot&\cdot&\cdot&\cdot&\cdot\\
			 	\grf_{24}&\cdot&\cdot&\cdot&\cdot&1&\cdot&\cdot&\cdot\\
			 	\grf_{25}&\cdot&1&1&\cdot&\cdot&\cdot&\cdot&\cdot\\
			 	\grf_{26}&\cdot&\cdot&\cdot&\cdot&\cdot&1&\cdot&\cdot\\
			 	\grf_{27}&\cdot&\cdot&\cdot&\cdot&\cdot&1&\cdot&\cdot\\
			 	\grf_{29}&\cdot&\cdot&\cdot&\cdot&\cdot&1&\cdot&\cdot\\
			 	\grf_{30}&\cdot&\cdot&\cdot&\cdot&\cdot&1&\cdot&\cdot\\
			 	\grf_{31}&\cdot&\cdot&\cdot&\cdot&\cdot&\cdot&1&\cdot\\
			 	\grf_{32}&\cdot&\cdot&\cdot&\cdot&\cdot&\cdot&1&\cdot\\
			 	\grf_{33}&1&\cdot&\cdot&\cdot&1&\cdot&\cdot&\cdot\\
			 	\grf_{34}&\cdot&\cdot&\cdot&\cdot&\cdot&\cdot&1&\cdot\\
			 	\grf_{35}&\cdot&\cdot&\cdot&\cdot&\cdot&\cdot&1&\cdot\\
			 	\grf_{40}&\cdot&1&2&\cdot&\cdot&\cdot&1&\cdot\\
			 		\grf_{42}&\cdot&\cdot&\cdot&\cdot&\cdot&\cdot&\cdot&1\\
			 		\grf_{44}&\cdot&\cdot&\cdot&\cdot&\cdot&\cdot&\cdot&1\\
			 		\grf_{45}&\cdot&\cdot&\cdot&\cdot&\cdot&\cdot&\cdot&1\\
			 	\end{block}
			 	\end{blockarray}
			 $\end{center}\vspace*{-0.2cm}
			 The upper part of the matrix is indexed by  $\grf_1$, $\grf_3$, $\grf_6$, $\grf_7$, $\grf_{17}$, $\grf_{26}$, $\grf_{31}$, $\grf_{41}$, together with the characters $\grf_{28}$, $\grf_{36}$, $\grf_{37}$, $\grf_{38}$, $\grf_{39}$, $\grf_{43}$, which are of defect 0.
			 \item[] $\mathbf{d^2-de+e^2=0}$:\vspace*{-0.5cm}
			 \begin{center}
			 	$\fontsize{4}{4}\selectfont
			 	\begin{blockarray}[t]{ccccccc}
			 	\begin{block}{c[cccccc]}
			 	\grf_{3}&1&&&\\
			 	\grf_{4}&&1&&\\
			 	\grf_{5}&&&1&\\
			 	\grf_{6}&&&&1\\
			 	\grf_{15}&&1&1&\\
			 	\grf_{25}&1&1&1&\\
			 	\grf_{40}&1&1&1&1\\
			 	\end{block}
			 	\end{blockarray}
			 	$\end{center}
			 \vspace*{-0.2cm}
			 \end{itemize}
			 \item[\small{C2.3.}]$(a^2-ab+b^2)=0$ and $(c^2+de)(d^2+ce)(e^2+dc)\not=0$:
			 \vspace*{-0.2cm}\begin{center}
			 	$\tiny{
			 	\begin{blockarray}[t]{cccc}
			 		\begin{block}{c[ccc]}
			 		\grf_{1}&1&&\\
			 		\grf_{2}&&1&\\
			 		\grf_{6}&1&1&\\
			 		\grf_{25}&&&1\\
			 		\grf_{40}&1&1&1\\
			 		\end{block}
			 		\end{blockarray}}
			 	$
			 \end{center}\vspace*{-0.3cm}
			 
			 \item[\small{C2.4.}]$(a^2-ab+b^2)(c^2+de)(d^2+ce)(e^2+dc)\not=0$:
			 \vspace*{-0.2cm}\begin{center}
			 	$\tiny{
			 	\begin{blockarray}[t]{ccc}
			 		\begin{block}{c[cc]}
			 		\grf_{6}&1&\\
			 		\grf_{25}&&1\\
			 		\grf_{40}&1&1\\
			 		\end{block}
			 		\end{blockarray}}
			 	$
			 	\vspace*{-0.4cm}
			 \end{center}
			 \end{itemize}
		\end{itemize}
\subsubsection{The 6-dimensional characters}
\label{sixi}
Each 6-dimensional character depends on a value $\gra\in\{a,b,c,d,e\}$. Without loss of generality, we examine $\grf_{43}$, which depends on $c$.  We focus on the values of $a,b,c,d,e$ that annihilate the associated Schur element $s_{\grf_{43}}$.
We encounter four types of factors in the factorization of  $s_{\grf_{43}}$. 
The first factor is $(c-a)(c-b)(c-d)(c-e)$, which is annihilated when $c=\grb$, for $\grb\in\{a,b,d,e\}$. If this happens, the character $\grf_{43}$ coincides with the 6-dimensional character depending on $\grb$. We examine now the rest of the factors.
\begin{itemize}[leftmargin=0.7cm]
	\item[\small{C1.}] $(ab+de)(ad+be)(ae+bd)=0$: We assume $ab=-de$. The characters $\grf_{16}$ and $\grf_{25}$  correspond to distinct modules, since $ab=-de$ and, hence, the associated matrix models are different. 
	 \footnotesize{\begin{verbatim}
	 	gap> t:=List(T,i->List(i,j->Value(j,["x", E(20)*Mvp("y")^-1*Mvp("t")*Mvp("w")])));;
	 	gap> t[43]=t[16]+t[25];
	 	true
	 	\end{verbatim}}\normalsize
	 \noindent
	 We follow now Section \ref{dd3} and we distinguish the following cases:
	 \begin{itemize}[leftmargin=0.95cm]
	\item[\small{C1.1.}]
	 	Neither $\grf_{16}$ nor $\grf_{25}$ is simple. We need to examine the following two cases:
	 	\begin{itemize}[leftmargin=-0.007cm]
	 	\item[]$\mathbf{a^2+bc=d^2+ce=0}$\textbf{:} Since $d^2=-ce$ and $a^2=-bc$, we obtain $a^2e=bd^2$. If $a=d$, then $e=b$ and, therefore, $ab=de$, which contradicts the fact that $ab=-de$. As a result, $a\not=d$ and, hence, $\grf_1\not=\grf_4$.
	 	Let $(b^2-bc+c^2)(c^2-ce+e^2)\not=0$. We provide the decomposition matrix for the cases $b=e$ and $b\not=e$, respectively. Notice that in the latter case, 
	 the characters $\grf_{10}$ and $\grf_{14}$ correspond to distinct modules.\vspace*{-0.2cm}
	 \begin{center}
	 	$\fontsize{4}{4}\selectfont\begin{blockarray}[t]{ccccc}
	 			\begin{block}{c[cccc]}
	 			\grf_{1}&1&&&\\
	 			\grf_{4}&&1&&\\
	 			\grf_{10}&&&1&\\
	 			\grf_{14}&&&&1\\
	 			\grf_{16}&1&&1&\\
	 			\grf_{25}&&1&&1\\
	 			\grf_{43}&1&1&1&1\\
	 			\end{block}
	 			\end{blockarray}
	 		$\quad \quad \quad \quad
	 		$\fontsize{4}{4}\selectfont
	 		\begin{blockarray}[t]{cccc}
	 				\begin{block}{c[ccc]}
	 					\grf_{1}&1&&\\
	 					\grf_{4}&&1&\\
	 					\grf_{10}&&&1\\
	 					\grf_{14}&&&1\\
	 					\grf_{16}&1&&1\\
	 					\grf_{25}&&1&1\\
	 					\grf_{43}&1&1&2\\
	 				\end{block}
	 			\end{blockarray}$
	 		\end{center}
	 		\vspace*{-0.2cm}
	 	We assume now that $b^2-bc+c^2=0$ and let $b=-E(3)c$. Since $a^2=-bc$ we have  $a=\pm E(3)^2c$. As a result, the characters $\grf_1$, $\grf_2$ and $\grf_3$ are distinct. We now focus on the behavior of character $\grf_{14}$ and we first assume that 
	 $c^2-ce+e^2=0$. Therefore, we have $b=-E(3)c$, $a=E(3)^2c$, $e=-E(3)c$ and $d=-E(3)^2c$.
	 	\vspace*{-0.3cm}
	 	\begin{center}
	 		$\fontsize{4}{4}\selectfont
	 		\begin{blockarray}[t]{cccccccccccccc}
	 		\begin{block}{c[ccccccccccccc]}
	 		&&&&&&&&&&\\
	 		&&&&&&&&&&\\
	 		&&&&&\raisebox{-10pt}{{\huge\mbox{{$I_{24}$}}}}&&&&&\\[-3ex]
	 		&&&&&&&&&&\\
	 		&&&&&&&&&\\
	 		&&&&&&&&&\\
	 		\cmidrule[0.00001cm](lr){2-11}
	 		\grf_{5}&\cdot&1&\cdot&\cdot&\cdot&\cdot&\cdot&\cdot&\cdot&\cdot\\
	 		\grf_{6}&1&1&\cdot&\cdot&\cdot&\cdot&\cdot&\cdot&\cdot&\cdot\\
	 		\grf_{9}&1&1&\cdot&\cdot&\cdot&\cdot&\cdot&\cdot&\cdot&\cdot\\
	 		\grf_{10}&\cdot&1&1&\cdot&\cdot&\cdot&\cdot&\cdot&\cdot&\cdot\\
	 		\grf_{13}&\cdot&\cdot&1&1&\cdot&\cdot&\cdot&\cdot&\cdot&\cdot\\
	 		\grf_{14}&\cdot&1&1&\cdot&\cdot&\cdot&\cdot&\cdot&\cdot&\cdot\\
	 		\grf_{15}&\cdot&\cdot&\cdot&\cdot&\cdot&1&\cdot&\cdot&\cdot&\cdot\\
	 		\grf_{16}&1&1&1&\cdot&\cdot&\cdot&\cdot&\cdot&\cdot&\cdot\\
	 		\grf_{20}&1&1&1&\cdot&\cdot&\cdot&\cdot&\cdot&\cdot&\cdot\\
	 		\grf_{21}&\cdot&\cdot&\cdot&\cdot&\cdot&\cdot&1&\cdot&\cdot&\cdot\\
	 		\grf_{22}&\cdot&1&1&1&\cdot&\cdot&\cdot&\cdot&\cdot&\cdot\\
	 		\grf_{25}&\cdot&1&1&1&\cdot&\cdot&\cdot&\cdot&\cdot&\cdot\\
	 		\grf_{27}&1&1&1&1&\cdot&\cdot&\cdot&\cdot&\cdot&\cdot\\
	 		\grf_{30}&1&1&1&1&\cdot&\cdot&\cdot&\cdot&\cdot&\cdot\\
	 		\grf_{32}&\cdot&\cdot&\cdot&\cdot&1&1&\cdot&\cdot&\cdot&\cdot\\
	 		\grf_{34}&1&2&1&\cdot&\cdot&\cdot&\cdot&\cdot&\cdot&\cdot\\
	 		\grf_{35}&\cdot&\cdot&\cdot&\cdot&1&1&\cdot&\cdot&\cdot&\cdot\\
	 		\grf_{40}&1&2&1&1&\cdot&\cdot&\cdot&\cdot&\cdot&\cdot\\
	 		\grf_{41}&\cdot&\cdot&\cdot&\cdot&\cdot&\cdot&\cdot&1&1&\cdot\\
	 		\grf_{43}&1&2&2&1&\cdot&\cdot&\cdot&\cdot&\cdot&\cdot\\
	 		\grf_{45}&\cdot&\cdot&\cdot&\cdot&\cdot&\cdot&\cdot&\cdot&\cdot&1\\
	 		\end{block}
	 		\end{blockarray}
	 		$\end{center}\vspace*{-0.2cm}
	 	The upper part of the matrix is labeled by  $\grf_1$, $\grf_2$, $\grf_3$, $\grf_4$,  $\grf_{7}$, $\grf_{11}$, $\grf_{17}$,  
	 	$\grf_{18}$,  $\grf_{19}$, $\grf_{26}$,  $\grf_{29}$,   and   $\grf_{42}$ together with the characters $\grf_8$, $\grf_{12}$, $\grf_{23}$, $\grf_{24}$, $\grf_{26}$, $\grf_{28}$, $\grf_{29}$, $\grf_{31}$, $\grf_{33}$,  $\grf_{36}$, $\grf_{37}$, $\grf_{38}$, $\grf_{39}$, and $\grf_{44}$, which are of defect 0.
	 	
	 	It remains to examine the case $c^2-ce+e^2\not=0$. We recall that $a,b$ and $c$ are distinct, $d\not=a$, and that $b=-E(3)c$. If $d=b$, then we use the equation $d^2+ce=0$ and we obtain $e=-E(3)^2c$,  which contradicts the fact that $c^2-ce+e^2\not=0$. As a result, $d\not=b$. If $d\not=c$, then
	 	 the decomposition matrix is of the following form:\vspace*{-0.2cm}
	 	 \begin{center}
	 	$\fontsize{4}{4}\selectfont
	 	\begin{blockarray}[t]{cccccccc}
	 		\begin{block}{c[ccccccc]}
	 		\grf_{1}&1&&&&\\
	 		\grf_{2}&&1&&&\\
	 		\grf_{3}&&&1&&\\
	 		\grf_{4}&&&&1&\\
	 		\grf_{14}&&&&&1\\
	 		\grf_{10}&&1&1&&\\
	 		\grf_{25}&&&&1&1\\
	 		\grf_{43}&1&1&1&1&1\\
	 		\end{block}
	 		\end{blockarray}
	 	$\end{center}\vspace*{-0.3cm}
	 	We assume now that $b=c$. Therefore, we have the case $a=-E(3)^2c, b=-E(3)c, d=c$, and $e=-c$:
	 	\vspace*{-0.6cm}
	 	\begin{center}
	 		$\fontsize{4}{4}\selectfont
	 		\begin{blockarray}[t]{cccccccccccccc}
	 		\begin{block}{c[ccccccccccccc]}
	 		&&&&&&&&&&\\
	 		&&&&&&&&&&\\
	 		&&&&&&\raisebox{-10pt}{{\huge\mbox{{$I_{24}$}}}}&&&&\\[-3ex]
	 		&&&&&&&&&&\\
	 		&&&&&&&&&\\
	 		&&&&&&&&&\\
	 		\cmidrule[0.00001cm](lr){2-12}
	 		\grf_{4}&\cdot&\cdot&1&\cdot&\cdot&\cdot&\cdot&\cdot&\cdot&\cdot&\cdot\\
	 		\grf_{7}&1&\cdot&1&\cdot&\cdot&\cdot&\cdot&\cdot&\cdot&\cdot&\cdot\\
	 		\grf_{8}&1&\cdot&1&\cdot&\cdot&\cdot&\cdot&\cdot&\cdot&\cdot&\cdot\\
	 		\grf_{10}&\cdot&1&1&\cdot&\cdot&\cdot&\cdot&\cdot&\cdot&\cdot&\cdot\\
	 		\grf_{11}&\cdot&1&1&\cdot&\cdot&\cdot&\cdot&\cdot&\cdot&\cdot&\cdot\\
	 		\grf_{15}&\cdot&\cdot&\cdot&\cdot&\cdot&\cdot&1&\cdot&\cdot&\cdot&\cdot\\
	 		\grf_{16}&1&1&1&\cdot&\cdot&\cdot&\cdot&\cdot&\cdot&\cdot&\cdot\\
	 		\grf_{17}&1&1&1&\cdot&\cdot&\cdot&\cdot&\cdot&\cdot&\cdot&\cdot\\
	 		\grf_{21}&\cdot&\cdot&\cdot&\cdot&\cdot&\cdot&\cdot&1&\cdot&\cdot&\cdot\\
	 		\grf_{24}&\cdot&\cdot&\cdot&\cdot&\cdot&\cdot&\cdot&\cdot&1&\cdot&\cdot\\
	 		\grf_{25}&\cdot&\cdot&1&\cdot&\cdot&\cdot&1&\cdot&\cdot&\cdot&\cdot\\
	 			\grf_{26}&\cdot&\cdot&\cdot&\cdot&1&1&\cdot&\cdot&\cdot&\cdot&\cdot\\
	 			\grf_{27}&1&\cdot&1&\cdot&\cdot&\cdot&1&\cdot&\cdot&\cdot&\cdot\\
	 		\grf_{29}&\cdot&\cdot&\cdot&\cdot&\cdot&\cdot&\cdot&\cdot&\cdot&1&\cdot\\
	 		\grf_{31}&\cdot&1&1&\cdot&\cdot&\cdot&1&\cdot&\cdot&\cdot&\cdot\\
	 		\grf_{32}&\cdot&\cdot&\cdot&1&\cdot&1&\cdot&\cdot&\cdot&\cdot&\cdot\\
	 			\grf_{34}&\cdot&\cdot&\cdot&\cdot&\cdot&\cdot&\cdot&\cdot&\cdot&\cdot&1\\
	 		\grf_{35}&1&1&2&\cdot&\cdot&\cdot&\cdot&\cdot&\cdot&\cdot&\cdot\\
	 		\grf_{40}&1&\cdot&1&\cdot&\cdot&\cdot&1&\cdot&\cdot&\cdot&\cdot\\
	 			\grf_{43}&1&1&2&\cdot&\cdot&\cdot&1&\cdot&\cdot&\cdot&\cdot\\
	 			\grf_{44}&1&1&2&\cdot&\cdot&\cdot&1&\cdot&\cdot&\cdot&\cdot\\
	 			\grf_{45}&\cdot&\cdot&\cdot&1&1&1&\cdot&\cdot&\cdot&\cdot&\cdot\\
	 		\end{block}
	 		\end{blockarray}
	 		$\end{center}\vspace*{-0.2cm}
	 	The upper part of the matrix is labeled by  $\grf_1$, $\grf_2$, $\grf_3$, $\grf_9$,  $\grf_{12}$, $\grf_{13}$, $\grf_{14}$,  
	 	$\grf_{20}$,  $\grf_{23}$, $\grf_{28}$,   and   $\grf_{33}$ together with the characters $\grf_5$, $\grf_{6}$, $\grf_{18}$, $\grf_{19}$, $\grf_{22}$, $\grf_{30}$, $\grf_{36}$, $\grf_{37}$, $\grf_{38}$,  $\grf_{39}$, $\grf_{41}$, and $\grf_{42}$, which are of defect 0.

	 		\item[]$\mathbf{c^2+ab=d^2+ce=0}$\textbf{:}
	 		We recall that $ab=-de$ and we obtain $c^2=de$. Moreover, from the fact that $d^2=-ce$ it follows $d^3=-c^3$. Since $d\not=c$, the characters $\grf_{3}$ and $\grf_4$ correspond to distinct modules. Let $c=-E(3)^{\grk}d$, with $\grk\in\{0,1,2\}$. Since $d^2=-ce$ we obtain $e=E(3)^{-\grk}d$ and $ab=-E(3)^{-\grk}d^2$.
	 		We first examine the case $\grk\not=0$. Without loss of generality, let $\grk=1$. Hence, $c^2-ce+e^2=0$ and, due to Proposition \ref{2ir} the character $\grf_{14}$ is simple. 	If $a^2-ab+b^2=0$, then the decomposition matrix is of the form described in C1.3 and C1.7 of section \ref{dimm5}.
	 			If $a^2-ab+b^2\not=0$, then the decomposition matrix is of the following form:\vspace*{-0.4cm}
	 			\begin{center}
	 			$\fontsize{4}{4}\selectfont
	 			\begin{blockarray}[t]{ccccc}
	 				\begin{block}{c[cccc]}
	 				\grf_{3}&1&&&\\
	 				\grf_{4}&&1&&\\
	 				\grf_{5}&&&1&\\
	 				\grf_{6}&&&&1\\
	 				\grf_{14}&1&&1&\\
	 				\grf_{16}&1&&&1\\
	 				\grf_{25}&1&1&1&\\
	 				\grf_{43}&2&1&1&1\\
	 				\end{block}
	 				\end{blockarray}
	 			$
	 		\end{center}\vspace*{-0.2cm}
	 		 We now assume that $\grk=0$. As a result, $(a^2-ab+b^2)(c^2-ce+e^2)\not=0$.  If $a\not\in\{c,e\}$ or $b\not\in\{c,e\}$ the characters $\grf_{6}$ and $\grf_{14}$ correspond to distinct modules. Therefore, we have:\vspace*{-0.6cm}
	 		 \begin{center}
	 			$\fontsize{4}{4}\selectfont\begin{blockarray}[t]{cccccccc}
	 				\begin{block}{c[ccccccc]}
	 				\grf_{3}&1&&&&&\\
	 				\grf_{4}&&1&&&&\\
	 				\grf_{6}&&&1&&&\\
	 				\grf_{14}&&&&1&&\\
	 				\grf_{16}&1&&1&&&\\
	 				\grf_{25}&&1&&1&&\\
	 				\grf_{43}&1&1&1&1&&\\
	 				&&&&&&\\
	 				\end{block}
	 				\end{blockarray}
	 			$\end{center}\vspace*{-0.2cm}
	 	We assume now $a,b\in\{c,e\}$. Suppose that $a=b=c$. Since $c=-d$ and $e=d$ we have $ab=c^2=de$, which contradicts the fact that $ab=-de$. Similarly, for $a=b=e$. As a result, $a\not=b$ and, hence, we can restrict ourselves to examining the case $a=c=-d$ and $b=e=d$. \vspace*{-0.5cm}
	 	\begin{center}
	 			 $
	 			 \fontsize{4}{4}\selectfont
	 			 \begin{blockarray}[t]{ccccccccccc}
	 			 	\begin{block}{c[cccccccccc]}
	 			 	&&&&&&&\\
	 			 	&&&&\raisebox{-10pt}{{\huge\mbox{{$I_{14}$}}}}&&&\\[-3ex]
	 			 	&&&&&&&\\
	 			 	&&&&&&&\\
	 			 	\cmidrule[0.00001cm](lr){2-9}
	 			 	\grf_{3}&1&\cdot&\cdot&\cdot&\cdot&\cdot&\cdot&\cdot\\
	 			 	\grf_{4}&\cdot&1&\cdot&\cdot&\cdot&\cdot&\cdot&\cdot\\
	 			 	\grf_{5}&\cdot&1&\cdot&\cdot&\cdot&\cdot&\cdot&\cdot\\
	 			 	\grf_{8}&\cdot&\cdot&1&\cdot&\cdot&\cdot&\cdot&\cdot\\
	 			 	\grf_{9}&\cdot&\cdot&1&\cdot&\cdot&\cdot&\cdot&\cdot\\
	 			 	\grf_{10}&\cdot&\cdot&1&\cdot&\cdot&\cdot&\cdot&\cdot\\
	 			 	\grf_{12}&\cdot&\cdot&\cdot&1&\cdot&\cdot&\cdot&\cdot\\
	 			 	\grf_{13}&\cdot&\cdot&1&\cdot&\cdot&\cdot&\cdot&\cdot\\
	 			 	\grf_{14}&\cdot&\cdot&1&\cdot&\cdot&\cdot&\cdot&\cdot\\
	 			 	\grf_{15}&\cdot&\cdot&\cdot&1&\cdot&\cdot&\cdot&\cdot\\
	 			 	\grf_{16}&1&\cdot&1&\cdot&\cdot&\cdot&\cdot&\cdot\\
	 			 	\grf_{17}&\cdot&1&1&\cdot&\cdot&\cdot&\cdot&\cdot\\
	 			 		\grf_{18}&\cdot&1&1&\cdot&\cdot&\cdot&\cdot&\cdot\\
	 			 			\grf_{19}&1&\cdot&1&\cdot&\cdot&\cdot&\cdot&\cdot\\
	 			 		\grf_{20}&1&\cdot&1&\cdot&\cdot&\cdot&\cdot&\cdot\\
	 			 			\grf_{21}&\cdot&1&1&\cdot&\cdot&\cdot&\cdot&\cdot\\
	 			 			\grf_{22}&\cdot&1&1&\cdot&\cdot&\cdot&\cdot&\cdot\\
	 			 			\grf_{23}&\cdot&1&1&\cdot&\cdot&\cdot&\cdot&\cdot\\
	 			 			\grf_{25}&\cdot&1&1&\cdot&\cdot&\cdot&\cdot&\cdot\\
	 			 	\grf_{27}&1&1&1&\cdot&\cdot&\cdot&\cdot&\cdot\\
	 			 	\grf_{28}&\cdot&\cdot&\cdot&\cdot&1&\cdot&\cdot&\cdot\\
	 			 		\grf_{29}&1&1&1&\cdot&\cdot&\cdot&\cdot&\cdot\\
	 			 			\grf_{30}&1&1&1&\cdot&\cdot&\cdot&\cdot&\cdot\\
	 			 	\grf_{33}&\cdot&\cdot&\cdot&\cdot&\cdot&1&\cdot&\cdot\\
	 			 	\grf_{34}&\cdot&\cdot&\cdot&\cdot&\cdot&\cdot&1&\cdot\\
	 			 	\grf_{35}&\cdot&\cdot&\cdot&\cdot&\cdot&\cdot&1&\cdot\\
	 			 	\grf_{41}&1&1&2&\cdot&\cdot&\cdot&\cdot&\cdot\\
	 			 	\grf_{43}&1&1&2&\cdot&\cdot&\cdot&\cdot&\cdot\\
	 			 	\grf_{44}&\cdot&\cdot&\cdot&\cdot&\cdot&\cdot&\cdot&1\\
	 			 	\grf_{45}&\cdot&\cdot&\cdot&\cdot&\cdot&\cdot&\cdot&1\\
	 			 	\end{block}
	 			 	\end{blockarray}
	 			 $\end{center}\vspace*{-0.2cm}
	 			 where the upper part of the matrix is indexed by  $\grf_1$, $\grf_2$, $\grf_6$, $\grf_{11}$, $\grf_{26}$, $\grf_{31}$, $\grf_{32}$, and $\grf_{42}$, together with the characters $\grf_7$, $\grf_{24}$, $\grf_{36}$, $\grf_{37}$, $\grf_{38}$, $\grf_{39}$, which are of defect 0.
	 			\end{itemize}
	 			\item[\small{C1.2.}]
	 			Only one of the characters $\grf_{16}$ and $\grf_{25}$ is simple. Without loss of generality and following Section \ref{dd3} we suppose that $c^2=-ab$. It remains to examine two cases; $a^2-ab+b^2\not=0$ and $a^2-ab+b^2=0$. The decomposition matrices are respectively the following:\vspace*{-0.6cm}
	 			\begin{center}
	 				$\tiny{
	 				\begin{blockarray}[t]{cccc}
	 				\begin{block}{c[ccc]}
	 				\grf_{3}&1&&\\
	 				\grf_{6}&&1&\\
	 				\grf_{25}&&&1\\
	 				\grf_{16}&1&1&\\
	 				\grf_{43}&1&1&1\\
	 				\end{block}
	 				\end{blockarray}}
	 				$\quad \quad\quad
	 				$\tiny{
	 				\begin{blockarray}[t]{ccccc}
	 				\begin{block}{c[cccc]}
	 				\grf_{1}&1&&&\\
	 				\grf_{2}&&1&&\\
	 				\grf_{3}&&&1&\\
	 				\grf_{25}&&&&1\\
	 				\grf_{6}&1&1&&\\
	 				\grf_{16}&1&1&1&\\
	 				\grf_{43}&1&1&1&1\\
	 				\end{block}
	 				\end{blockarray}}
	 				$\end{center}\vspace*{-0.3cm}
	 			\item[\small{C1.3.}]The characters $\grf_{16}$ and $\grf_{25}$ are simple. We recall that $\grf_{43}=\grf_{16}+\grf_{25}$. Therefore, the decomposition matrix is of the following form:
	 			\vspace*{-0.2cm}
	 			\begin{center}
	 				$\tiny{
	 				\begin{blockarray}[t]{ccc}
	 				\begin{block}{c[cc]}
	 				\grf_{16}&1&\\
	 				\grf_{25}&&1\\
	 				\grf_{43}&1&1\\
	 				\end{block}
	 				\end{blockarray}}
	 				$
	 			\end{center}\vspace*{-0.3cm}
	 			
	 	\item[\small{C2}.] $(a^2c-bde)(b^2c-ade)(d^2c-abe)(e^2c-abd)=0$:	Without loss of generality, we suppose that $e^2c=abd$. We type:
	 	\footnotesize{\begin{verbatim}
	 		gap> t:=List(T,i->List(i,j->Value(j,["t", -Mvp("w")^2*Mvp("y")^-1*Mvp("z")*Mvp("x")^-1])));;
	 		gap> t[43]=t[14]+t[35];
	 		true
	 		\end{verbatim}}\normalsize
	 	\noindent
	 	We distinguish the following cases, based on Sections \ref{ddd2} and \ref{sout}. 
	 	If the characters $\grf_{35}$ and $\grf_{14}$ are not simple, we encounter Cases C1.2 and C1.3. On the other hand, if
	 	the characters $\grf_{14}$ and $\grf_{35}$ are simple we have the following form: \vspace*{-0.2cm}\begin{center}
	 		$\tiny{	 	\begin{blockarray}[t]{cccc}
	 			\begin{block}{c[ccc]}
	 			\grf_{14}&1&&\\
	 			\grf_{30}&&1&\\
	 				\grf_{35}&&&1\\
	 			\grf_{43}&1&&1\\
	 			\end{block}
	 			\end{blockarray}}
	 		$\end{center}\vspace*{-0.2cm}
	 		We consider now the case where
	 		 $\grf_{35}$ is simple, while  $\grf_{14}$ is not. The decomposition matrix is of the form:
	 		\vspace*{-0.6cm}\begin{center}
	 			$\tiny{
	 			\begin{blockarray}[t]{cccc}
	 			\begin{block}{c[ccc]}
	 			\grf_{3}&1&&\\
	 			\grf_{5}&&1&\\
	 			\grf_{35}&&&1\\
	 			\grf_{14}&1&1&\\
	 			\grf_{43}&1&1&1\\
	 			\end{block}
	 			\end{blockarray}}
	 			$\end{center}\vspace*{-0.2cm}
	 	Finally, we consider the case where the character $\grf_{14}$ is simple, while the character $\grf_{35}$ is not. The only new case, apart from the ones studied in Case C1, is 
	 		the case where $\grf_{35}$ breaks up into two 2-dimensional simple characters.
	 		Due to Proposition \ref{2ir} and Case C2 of Section \ref{sout}, we assume that  $ab=E(3)cd$ and $(a^2-ab+b^2)(c^2-cd+d^2)\not=0$. As a result, $\grf_{35}=\grf_6+\grf_{13}$, where $\grf_6$ and $\grf_{13}$ are distinct.  If $ab=ce$, one could use the fact that $e^2c=abd$ in order to obtain $e=d$. However, since $ab=E(3)cd$, we have $ce=E(3)ce$, which is a contradiction. As a result, $ab\not=ce$ and the characters $\grf_6$ and $\grf_{14}$ are distinct. Moreover, $\grf_{13}$ and $\grf_{14}$ are  distinct. These characters correspond to the same module, if $e=d$. Since $e^2c=abd$ and the 
	 		fact that $ab=E(3)cd$, $e=d$ is a contradiction. Summing up, we have:
	 		\vspace*{-0.3cm}\begin{center}
	 		$\tiny{
	 	\begin{blockarray}[t]{cccc}
	 			\begin{block}{c[ccc]}
	 			\grf_{6}&1&&\\
	 			\grf_{13}&&1&\\
	 			\grf_{14}&&&1\\
	 			\grf_{43}&1&1&1\\
	 			\end{block}
	 			\end{blockarray}}
	 		$
	 	\end{center}\vspace*{-0.4cm}
	 \end{itemize}
	 \item[C3.] $c^4+abde=0$: We type:
 \footnotesize{\begin{verbatim}
	 	gap> t:=List(T,i->List(i,j->Value(j,["w", E(4)*Mvp("z")^4*Mvp("y")^-1*Mvp("t")^-1*Mvp("x")^-1])));;
	 	gap> t[43]=t[3]+t[40];
	 	true
	 	\end{verbatim}} \vspace*{-0.4cm}\normalsize
	 \noindent
	 Following section \ref{dimm5} we notice that the only case we have not examined in  C1 and C2 is when $\grf_{40}$ is simple. In this case the decomposition matrix is of the following form: \vspace*{-0.2cm}\begin{center}
	 	$\tiny{
	 	\begin{blockarray}[t]{ccc}
	 		\begin{block}{c[cc]}
	 		\grf_{3}&1&\\
	 		\grf_{40}&&1\\
	 		\grf_{43}&1&1\\
		\end{block}
	 		\end{blockarray}}
	 	$\end{center}\vspace*{-0.7cm}
	 \end{itemize}
	 \begin{rem}
	 	\mbox{}
	 	\vspace*{-\parsep}
	 	\vspace*{-\baselineskip}\\
	 \begin{enumerate}
	 	\item Let $\gru$ be a specialization, such that the algebra $\mathbb{C}H_k$ coincides with the specialized cyclotomic Hecke algebra for $d$-Harish-Chandra series of $G_4$, $G_8$ or $G_{16}$. Using the methodology described in this paper we recover the same decomposition matrices determined in \cite{chlouveraki} by Chlouveraki and Miyachi.
	 	
	 	\item
	 	In \cite{chavli} \S 5 we have classified the simple representations of the braid group $B_3$ for dimension $k=2, 3, 4, 5$. Let $s_1\mapsto A$ and $s_2\mapsto B$ be such a representation. In order to deal with the case $k=5$, we made an assumption for the determinant of the matrix $A$ (which is the same as the determinant of the matrix $B$); we assumed that det$A\not=-\grl_i^6\grl_j^{-1}$, where $\grl_i, \grl_j$ denote any eigenvalues of $A$, not necessarily distinct.
	 	Under this assumption we proved that every 5-dimensional simple $\mathbb{C}B_3$-module coincides with a 5-dimensional simple $\mathbb{C}H_5$-module of defect 0.
	 In Section \ref{sixi} we  consider the case det$A=-\grl_i^6\grl_j^{-1}$ and we obtain again the result that every 5-dimensional simple $\mathbb{C}B_3$-module coincides with a 5-dimensional simple $\mathbb{C}H_5$-module, which is not of defect 0. More precisely, it is in the same block with a 6-dimensional non-simple $\mathbb{C}H_5$-module.
	 	\end{enumerate}
	 \end{rem}

\end{document}